\numberwithin{equation}{section}
\newtheorem{theorem}{Theorem}[section]
\newtheorem{lemma}[theorem]{Lemma}
\newtheorem{corollary}[theorem]{Corollary}
\newtheorem{hypothesis}[theorem]{Hypothesis}
\theoremstyle{remark}
\newtheorem*{remark}{Remark}
\theoremstyle{remark}
\newcommand{\Z}{\mathbb{Z}}
\newcommand{\C}{\mathbb{C}}
\newcommand{\R}{\mathbb{R}}
\newcommand{\Q}{\mathbb{Q}}
\newcommand{\D}{\mathbb{D}}
\renewcommand{\S}{\mathbb{S}^1}
\newcommand{\re}{\textup{Re}}
\newcommand{\im}{\textup{Im}}
\newcommand{\M}{\mathcal{M}}
\newcommand{\A}{\mathbb{A}}
\title[Special Values of Motivic $L$-functions]{Special Values of Motivic $L$-functions and zeta-polynomials for symmetric powers of elliptic curves}
\author{Steffen L{\"o}brich}
\address{
Mathematisches Institut, Universit\"at zu K\"oln \\
Lindenthal, Weyertal 86-90, 50931 K\"oln, Germany}
\email{ steffen.loebrich@uni-koeln.de}
\author{Wenjun Ma}
\address{
School of Mathematics, Shandong University \\
Jinan, Shandong 250100, China}
\email{wenjunma.sdu@hotmail.com}
\author{Jesse Thorner}
\address{
Department of Mathematics, Stanford University \\
Building 380, Stanford, CA 94305, United States}
\email{jthorner@stanford.edu}
\begin{document}

\begin{abstract}
Let $\M$ be a pure motive over $\Q$ of odd weight $w\geq3$, even rank $d\geq2$, and global conductor $N$ whose $L$-function $L(s,\M)$ coincides with the $L$-function of a self-dual algebraic tempered cuspidal symplectic representation of $\mathrm{GL}_{d}(\mathbb{A}_{\Q})$.  We show that a certain polynomial which generates special values of $L(s,\M)$ (including all of the critical values) has all of its zeros equidistributed on the unit circle, provided that $N$ or $w$ are sufficiently large with respect to $d$.  These special values have arithmetic significance in the context of the Bloch-Kato conjecture.  We focus on applications to symmetric powers of semistable elliptic curves over $\Q$.  Using the Rodriguez-Villegas transform, we use these results to construct large classes of ``zeta-polynomials'' (in the sense of Manin) arising from symmetric powers of semistable elliptic curves; these polynomials have a functional equation relating $s\mapsto 1-s$, and all of their zeros on the line $\Re(s)=1/2$.
\end{abstract}

\maketitle

\section{Introduction and statement of results}

Let  $f(z)=\sum_{n=1}^\infty a_f(n)q^n$ be a normalized holomorphic cuspidal modular form of even weight $k\geq2$ and level $N$, and trivial nebentypus.  Assume further that $f$ is an eigenform for the Hecke operators $T_p$ for $p\nmid N$ and $U_p$ for all $p\mid N$.  We call such a modular form a newform.  The $L$-function $L(s,f)$ associated to a newform $f$, which is given by
\begin{equation}
\label{eqn:L(s,f)}
L(s,f):=\sum_{n=1}^{\infty}\frac{a_f(n)}{n^s}=\Big(\prod_{p\mid N}\frac{1}{1-a_f(p)p^{-s}}\Big)\prod_{p\nmid N}\frac{1}{1-a_f(p)p^{-s}+p^{k-1-2s}},
\end{equation}
has an analytic continuation to $\mathbb{C}$.  The completed $L$-function
\begin{equation}
\label{eqn:functional_newform}
\Lambda(s,f)=\Big(\frac{\sqrt{N }}{2\pi}\Big)^s\Gamma(s)L(s,f),
\end{equation}
is an entire function of order one and satisfies the functional equation $\Lambda(s,f)=\varepsilon(f)\Lambda(k-s,f)$, where $\varepsilon(f)\in\{-1,1\}$.  The completed $L$-function arises as a period integral of $f$:
\begin{equation}
\label{eqn:period_int}
\Lambda(s,f)=N ^{s/2}\int_0^{\infty}f(iy)y^{s-1}dy.
\end{equation}
One defines the period polynomial associated to $f$ by $r_f(z):=\int_0^{i\infty}f(\tau)(\tau-z)^{k -2}d\tau$, which is a polynomial of degree $k -2$ in $z$.  Using \eqref{eqn:period_int}, we expand $(\tau-z)^{k -2}$ to obtain
\begin{equation}
\label{eqn:pp_newform}
r_f(z)=\Big(\frac{i}{\sqrt{N}}\Big)^{k -1}\sum_{j=0}^{k -2}{k -2\choose j}(iz\sqrt{N})^j\Lambda(k -1-j,f).
\end{equation}
By expressing $\Lambda(s,f)$ in terms of $L(s,f)$ via \eqref{eqn:functional_newform}, we see that $r_f(z)$ is a generating function for the critical values $L(1,f)$, $L(2,f),\ldots,$ $L(k -1,f)$.  For additional background and details, see \cite{JMOS} and the sources contained therein.

It follows from the functional equation for $\Lambda(s,f)$ that $r_f(z)$ satisfies a functional equation of its own, relating $r_f(\frac{z}{i\sqrt{N}})$ to $r_f(\frac{1}{iz\sqrt{N}})$ and fixing the unit circle $\S=\{z\in\C\colon|z|=1\}$.  In analogy with the expected behavior of the nontrivial zeros of the Riemann zeta function $\zeta(s)$ or the nontrivial zeros of $L(s,f)$, one might expect that all of the zeros of $r_f(\frac{z}{i\sqrt{N}})$ lie on $\S$.  Because of the similarity with the Riemann hypothesis, this has been called the {\it Riemann hypothesis for period polynomials}.  Conrey, Farmer, and Imamoglu \cite{MR3118875} proved result of this sort for the odd part of $r_f(\frac{z}{i\sqrt{N}})$, and the Riemann hypothesis for the period polynomials associated to newforms of level 1 and even weight $k\geq2$ was established by El-Guindy and Raji \cite{EGR}.  The Riemann hypothesis for period polynomials is now a theorem due to Jin, Ma, Ono, and Soundararajan \cite{JMOS} for all newforms of weight $k \geq2$ with trivial nebentypus; furthermore, they proved that if either $k $ or $N $ is sufficiently large, then the zeros of $r_f(\frac{z}{i\sqrt{N}})$ are equidistributed on $\S$.

The truth of the Riemann hypothesis for period polynomials, along with the statement of equidistribution, introduces strong conditions on the sizes of the critical values $L(1,f)$, $L(2,f),\ldots,$ $L(k -1,f)$; these values have significance in algebraic number theory and arithmetic geometry.  For newforms $f$ of weight $2$ associated to elliptic curves, $r_f(z)$ is a constant polynomial with a non-zero factor of $L(1,f)$.  If the Birch and Swinnerton-Dyer conjecture is true, then $L(1,f)$ encapsulates much of the arithmetic of the elliptic curve, including order of the Tate-Shafarevich group and whether or not the rank of the Mordell-Weil group is positive.  Unfortunately, the results in \cite{JMOS} cannot provide insight into the Birch and Swinnerton-Dyer conjecture, because for $k=2$, the period polynomial is constant.  Thus the Riemann hypothesis for period polynomials when $k=2$ is trivially satisfied without shedding light on $L(1,f)$. If $k \geq 4$, the critical values hold similar importance in the context of the Bloch-Kato conjecture \cite{BK}, which generalizes of the Birch and Swinnerton-Dyer conjecture.  

In this paper, we use the ideas in \cite{JMOS} to study critical values of motivic $L$-functions.  It is well-known that each modular $L$-function $L(s,f)$ is attached to a certain pure motive over $\mathbb{Q}$ of weight $k-1$, conductor $N$, and rank 2; furthermore, $L(s,f)$ is the $L$-function of a certain cuspidal automorphic representation of $\mathrm{GL}_2(\mathbb{A}_{\Q})$.  (Here, $\A_{\Q}$ denotes the ring of adeles of $\Q$.)  The critical values of motivic $L$-functions carry similar arithmetic significance in the context of the Bloch-Kato conjecture.  When motivic $L$-functions coincide with automorphic $L$-functions, they have important analytic properties which generalize those of $L(s,f)$.  However, there does not appear to be a canonical generating polynomial for critical values of motivic $L$-functions that generalizes the properties of $r_f(z)$.  Thus we construct a polynomial $p_{\M}(z)$ (see \eqref{stuff}) which mimics $r_f(\frac{z}{i\sqrt{N}})$ and prove the following. 

\begin{theorem}
\label{thm:main_theorem_1}
Let $\mathcal{M}$ be a pure motive over $\Q$ of odd motivic weight $w=2m+1\geq3$, even rank $d\geq 2$, global conductor $N$, and Hodge numbers $h_{\nu}$ for $0\leq\nu\leq m$ (see Section \ref{sec:motivic}).  Suppose that the $L$-function $L(s,\M)$ of $\M$ coincides with the $L$-function of an algebraic, tempered, cuspidal symplectic representation of $\mathrm{GL}_{d}(\mathbb{A}_{\Q})$.  Let $p_{\mathcal{M}}(z)$ be the polynomial defined in \eqref{stuff}.
\begin{enumerate}
\item If $m=1$ and $h_0\in\{0,1\}$, then the zeros of $p_{\mathcal{M}}(z)$ lie on $\S$ and tend to be equidistributed as $N\rightarrow\infty$.
\item If $m\geq2$, $2m^{h_m} \geq(1+1/m)^{h_0}$, and $N> A_m^d$ (where $A_m$ is defined by \eqref{eqn:A_def}), then the zeros of $p_{\mathcal{M}}(z)$ lie on $\S$ and tend to be equidistributed as $N\rightarrow\infty$.
\item If $m$ is sufficiently large, then nearly all of the zeros of $p_{\M}(z)$ lie on $\S$. (See Theorem \ref{thm:largem} for a more precise statement.)
\end{enumerate}
\end{theorem}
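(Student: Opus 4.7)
\medskip
\noindent\textbf{Proof proposal.} The plan is to follow the general strategy of Jin--Ma--Ono--Soundararajan \cite{JMOS}: exploit the functional equation of $\Lambda(s,\M)$ to reduce the problem to locating sign changes of a real-valued trigonometric expression on $\mathbb{S}^1$, and then isolate one dominant term that beats the remaining tail. Because $\M$ has odd weight $w=2m+1$, is self-dual, and arises from a symplectic $\mathrm{GL}_d$-representation, the functional equation takes the form $\Lambda(s,\M)=\varepsilon\,\Lambda(w+1-s,\M)$ with $\varepsilon=\pm 1$. Writing $p_\M(z)$ (as defined in \eqref{stuff}) as a generating polynomial for the critical values $\Lambda(m+1,\M),\Lambda(m+2,\M),\ldots$ (suitably normalized by powers of $\sqrt{N}$ and binomial coefficients, in direct analogy with \eqref{eqn:pp_newform}), the functional equation forces the relation $z^{\deg p_\M}p_\M(1/z)=\pm p_\M(z)$, so $\mathbb{S}^1$ is stable and $p_\M(e^{i\theta})$ can be rewritten, after pulling out a suitable phase, as a real trigonometric polynomial $T_\M(\theta)$ in $\cos\bigl((j-\tfrac{d-2}{2})\theta\bigr)$ (or $\sin$).

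Once $T_\M$ is in hand, the next step is to identify the dominant coefficient. The central critical value contributes a term whose size is controlled by a binomial coefficient near the middle together with $\Lambda(m+1,\M)$; by Stirling applied to the archimedean $\Gamma$-factors associated to the Hodge numbers $h_0,\dots,h_m$, together with the temperedness hypothesis which gives $|L(s,\M)|\ll 1$ uniformly on the central critical line (up to well-understood archimedean and conductor factors), one can estimate the ratio of any non-central term to the central one. The conditions in (1) and (2) ($h_0\in\{0,1\}$ when $m=1$; the comparison $2m^{h_m}\geq(1+1/m)^{h_0}$ and $N>A_m^d$ when $m\geq 2$) are precisely what is needed to make the sum of the off-central terms, after multiplying by their $N$-dependent weights of the form $N^{-j/2}$ or similar, strictly smaller than the central contribution. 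Combining this with a sign analysis at the $d-1$ roots of the leading cosine/sine produces $\deg p_\M$ sign changes of $T_\M$ on $[0,2\pi)$, hence all zeros lie on $\mathbb{S}^1$. Equidistribution then follows by an Erdős--Turán type estimate: as $N\to\infty$ the central term becomes overwhelmingly dominant, so $T_\M(\theta)$ converges (after normalization) to a pure cosine of known frequency, whose zeros are equidistributed.

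For part (3), the idea is the same but now one lets $m\to\infty$ with $N$ fixed. The advantage is that the $\Gamma$-factor ratios become extremely lopsided in favor of a cluster of central terms, and the coefficients decay geometrically away from the center thanks to Stirling; the cost is that finitely many terms on the edges of the coefficient vector may fail the dominance comparison. These edge terms can contribute at most $O(1)$ off-circle zeros (to be made precise in Theorem \ref{thm:largem}), so one obtains the weaker ``nearly all'' conclusion by showing that the real trigonometric polynomial obtained after excising a small window near the central indices retains the required $\deg p_\M - o(\deg p_\M)$ sign changes.

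The main obstacle will be the term-by-term comparison in step two: the ratio of a non-central $\Lambda(m+1+j,\M)$ to $\Lambda(m+1,\M)$ is a product of $d$ ratios of $\Gamma$-values governed by the Hodge numbers $h_\nu$, and one must extract a clean bound of the form $\prod_\nu (\text{something})^{h_\nu}$ that transparently reflects the arithmetic of the motive. It is precisely this extraction that forces the explicit hypotheses on $h_0,h_m$ in (1)--(2) and the constant $A_m$ in \eqref{eqn:A_def}; getting the exponents to match is the delicate bookkeeping at the heart of the argument.
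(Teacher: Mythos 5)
Your high-level framing is right — use the functional equation to pull a phase out of $p_\M(e^{i\theta})$ and reduce to a real trigonometric polynomial, then control the coefficients — but the key analytic idea you propose (``one dominant central term beats the tail'') is both the wrong tool and, more importantly, points in the wrong direction. By Lemma~\ref{lem:monotonicity}, $\Lambda(m+1,\M)\leq\Lambda(m+2,\M)\leq\cdots$; the \emph{central} value is the \emph{smallest}, not the largest. The binomial weights $\prod_\nu\binom{2m-\nu}{m-j}^{h_\nu}$ do peak at the center, and the whole content of the proof is showing that the $\Lambda$-growth toward the edges overcomes the binomial decay. The paper does not compare one term against the sum of the rest (which in any case would not directly locate zeros on $\S$); it establishes the full monotone chain $a_0\le a_1\le\cdots\le a_{m-1}<a_m$ for the coefficients of $P_\M$ and then invokes the P\'olya--Szeg\H{o} lemma (Lemma~\ref{sze}), which is the mechanism that simultaneously yields all zeros on $\S$ \emph{and} equidistribution (one zero per window of width $\approx\pi/(2m+1)$). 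The ratio estimate that makes the chain work is Lemma~\ref{zetaest}, giving $L(m+j+1,\M)/L(m+j+2,\M)\le(\zeta(j+\tfrac12)/\zeta(j+\tfrac32))^d$ via the Euler product and GRC — not a convexity bound on the critical line as you suggest. Your hypotheses on $h_0,h_m$ and $N>A_m^d$ do enter, but only to verify the two endpoints of that chain (the $j=0$ step uses $2m^{h_m}\ge(1+1/m)^{h_0}$; the generic steps give the constant $A_m$).

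For part (3) the orientation is again reversed: as $m\to\infty$ the normalized coefficients of $Q_\M$ concentrate at the edge index $j=0$ (the term $\Lambda(2m+1,\M)$), not at a ``cluster of central terms,'' and decay like $1/(j!)^{d/2}(2\pi/\sqrt N)^{dj/2}$. The paper's argument compares $Q_\M$ via Rouch\'e to a truncation of the generalized hypergeometric series $F_{d,N}(z)=\sum_j (j!)^{-d/2}((2\pi)^{d/2}z/\sqrt N)^j$; the exact count of off-circle zeros is $c_{d,N}$, the number of zeros of $F_{d,N}$ in $\D$ (Theorem~\ref{thm:largem}), which is a very specific structure your sketch does not capture. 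Finally, note that the $m=1$ case splits on $\varepsilon$: when $\varepsilon=-1$ one gets $p_\M(z)=(z^2-1)\Lambda(3,\M)$ trivially, and when $\varepsilon=1$ the equidistribution comes from the explicit convexity-bound estimate $\Lambda(2,\M)/\Lambda(3,\M)\ll N^{-1/4+\epsilon}$; your proposal does not address this case distinction.
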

\begin{remark}
If $L(s,\mathcal{M})$ is the $L$-function of a newform of (modular) weight $k \geq4$, then $p_{\M}(z)$ reduces to a constant multiple of $r_f(\frac{z}{i\sqrt{N}})$, whose zeros are studied in \cite{JMOS}.
\end{remark}

It is unclear how to ensure that all of the zeros lie on $\mathbb{S}^1$ while maintaining uniformity in $d$ when $m\geq2$ and $d$ is large compared to $\log N$.  Despite this setback, we already have a result that is strong enough to address a natural family of examples, namely the odd symmetric power $L$-functions $L(s,\mathrm{Sym}^n f)$ of the newforms $f$ considered in \cite{JMOS} that do not have complex multiplication (CM). The next result follows from Theorem \ref{thm:main_theorem_1} in case of $\mathcal{M}=\mathrm{Sym}^n f$ and $n$ odd.

\begin{corollary}
\label{cor:sym^n}
	Let $n\geq3$ be an odd integer and $f$ a non-CM newform of even integral weight $k\geq2$, squarefree level $N\geq13$, trivial nebentypus, and integral Fourier coefficients. We assume that $N\ge 46$ if $(k,n)=(2,5)$ and $N\ge 17$ if $(k,n)\in\{(2,7),(4,3)\}$.     If $L(s,\mathrm{Sym}^n f)$ is the $L$-function of an algebraic tempered cuspidal symplectic representation of $\mathrm{GL}_{n+1}(\mathbb{A}_{\Q})$, then all of the zeros of $p_{\mathrm{Sym}^n f}(z)$ lie on $\mathbb{S}^1$.  The zeros tend to be equidistributed as $n$ or $N$ goes to $\infty$.
	\end{corollary}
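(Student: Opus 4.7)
The plan is to recognize $\M = \mathrm{Sym}^n f$ as a motive satisfying the hypotheses of Theorem~\ref{thm:main_theorem_1} and then invoke the appropriate case. First I would record the motivic invariants. The motive attached to $f$ has weight $k-1$, rank $2$, and Hodge numbers $h^{k-1,0}=h^{0,k-1}=1$, so $\mathrm{Sym}^n f$ has rank $d=n+1$ (even, since $n$ is odd and $\geq 3$) and motivic weight $w=n(k-1)$ (odd, since $n$ is odd and $k$ is even). Writing $w=2m+1$ with $m=(n(k-1)-1)/2$, the Hodge decomposition of $\mathrm{Sym}^n f$ sits at the bidegrees $(j(k-1),(n-j)(k-1))$ for $j=0,\ldots,n$ with multiplicity one each, so $h_0=1$ always, while $h_m=1$ when $k=2$ and $h_m=0$ when $k\geq 4$. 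Because $N$ is squarefree, the local components of $f$ at primes dividing $N$ are Steinberg, whence those of $\mathrm{Sym}^n f$ are as well, and the conductor of $\mathrm{Sym}^n f$ equals $N^n$. The remaining hypothesis---that $L(s,\mathrm{Sym}^n f)$ coincides with the $L$-function of an algebraic tempered cuspidal symplectic representation of $\mathrm{GL}_{n+1}(\A_\Q)$---is precisely what the corollary assumes.

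Next I would split into cases. When $(k,n)=(2,3)$ one has $m=1$ and $h_0=1\in\{0,1\}$, so part (1) of Theorem~\ref{thm:main_theorem_1} applies directly and gives all zeros on $\S$ with equidistribution as $N\to\infty$. Otherwise $m\geq 2$, and the auxiliary inequality $2m^{h_m}\geq (1+1/m)^{h_0}$ is automatic: it reads $2m\geq 1+1/m$ when $k=2$ and $2\geq 1+1/m$ when $k\geq 4$, both of which hold for all $m\geq 1$. Part (2) then applies provided $N^n>A_m^{n+1}$, or equivalently $N>A_m^{(n+1)/n}$.

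The last step is a finite numerical check using the explicit expression for $A_m$ in~\eqref{eqn:A_def}. I would tabulate $A_m^{(n+1)/n}$ for small $(k,n)$ with $m\geq 2$ and verify that $N\geq 13$ already forces $N>A_m^{(n+1)/n}$, except precisely in the three exceptional pairs $(k,n)\in\{(2,5),(2,7),(4,3)\}$, where the sharper thresholds $N\geq 46$ and $N\geq 17$ stated in the corollary are needed. Because the exponent $(n+1)/n$ tends to $1$ as $n\to\infty$ and $A_m$ grows only mildly in $m$, the exceptional list is finite, and for all remaining $(k,n)$ the hypothesis $N\geq 13$ uniformly suffices. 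The equidistribution claim as $n$ or $N$ tends to infinity is then inherited directly from Theorem~\ref{thm:main_theorem_1}. The main obstacle is this case-by-case tabulation: it is routine once $A_m$ is made explicit, but it is exactly where the precise thresholds in the corollary come from, since one must identify every small pair $(k,n)$ for which the naive bound $A_m^{(n+1)/n}<13$ fails.
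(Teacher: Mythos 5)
Your proposal is correct and takes essentially the same route as the paper: identify the motivic data of $\mathrm{Sym}^n f$ (weight $n(k-1)$, rank $n+1$, conductor $N^n$, Hodge numbers in $\{0,1\}$ with $h_0=1$ and $h_m\in\{0,1\}$ depending on $k$), then invoke Theorem~\ref{thm:main_theorem_1}~(1) when $m=1$ (i.e.\ $(k,n)=(2,3)$) and Theorem~\ref{thm:main_theorem_1}~(2) otherwise, after noting that $2m^{h_m}\geq(1+1/m)^{h_0}$ is automatic and that $N^n>A_m^{n+1}$ reduces to $N>A_m^{(n+1)/n}$. The paper compresses this to ``one can check that the conditions are satisfied,'' whereas you make explicit the case split and the finite tabulation that produces the exceptional thresholds for $(k,n)\in\{(2,5),(2,7),(4,3)\}$.
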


We find the most interesting case to be when $k=2$ because the results in \cite{JMOS} are trivial in this case. By numerically checking the cases that are not covered by Corollary \ref{cor:sym^n}, we obtain the following result.

\begin{theorem}
\label{thm:sym^n}
Let $E/\Q$ be a non-CM elliptic curve of squarefree conductor $N$, and let $n\geq3$ be an odd integer.  If $L(s,\mathrm{Sym}^n E)$ is the $L$-function of an algebraic, tempered, cuspidal symplectic representation of $\mathrm{GL}_{n+1}(\mathbb{A}_{\Q})$, then all of the zeros of $p_{\mathrm{Sym}^n E}(z)$ given by \eqref{stuff} lie on $\mathbb{S}^1$. The zeros tend to be equidistributed as $n$ or $N$ goes to $\infty$.
\end{theorem}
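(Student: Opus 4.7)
The plan is to reduce the theorem to Corollary~\ref{cor:sym^n} via modularity of elliptic curves over $\Q$, and then to dispose of the finitely many exceptional pairs $(N,n)$ by direct numerical computation. By the modularity theorem of Wiles, Taylor--Wiles, and Breuil--Conrad--Diamond--Taylor, every elliptic curve $E/\Q$ of conductor $N$ is modular: $L(s,E)=L(s,f_E)$ for a weight-two newform $f_E$ of level $N$, trivial nebentypus, and integral Fourier coefficients. The curve $E$ is non-CM if and only if $f_E$ is non-CM, and the hypothesis on $L(s,\mathrm{Sym}^n E)$ transfers verbatim to $L(s,\mathrm{Sym}^n f_E)$. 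Since the polynomial $p_{\mathrm{Sym}^n E}(z)$ defined by \eqref{stuff} depends only on the $L$-function, Corollary~\ref{cor:sym^n} applied to $f_E$ yields Theorem~\ref{thm:sym^n} whenever the conductor hypothesis of that corollary holds.

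The pairs $(N,n)$ not covered by Corollary~\ref{cor:sym^n} form a short list: (i) $N=11$, which falls below the threshold $N\geq 13$ for every odd $n\geq 3$; (ii) $n=5$ with squarefree conductor $11\leq N<46$; and (iii) $n=7$ with squarefree conductor $11\leq N<17$. For (ii) and (iii) the set of non-CM elliptic curves over $\Q$ with the relevant conductor is finite, and can be enumerated (for example using the LMFDB). For each such $E$ and each relevant $n$, the proposal is to compute a sufficient truncation of the Dirichlet coefficients of $L(s,\mathrm{Sym}^n E)$ from the Hecke eigenvalues of $f_E$ via the standard recursion on the Satake parameters of symmetric powers, assemble $p_{\mathrm{Sym}^n E}(z)$ using \eqref{stuff}, and verify numerically in a computer algebra system (Sage, PARI, or Magma) that every root lies on $\S$. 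Because $p_{\mathrm{Sym}^n E}(z)$ is self-inversive by virtue of the functional equation, this reduces to computing its associated real polynomial under the substitution $z+z^{-1}\mapsto 2\cos\theta$ and checking that its roots lie in $[-2,2]$ — a clean finite check.

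The main obstacle is case~(i): unless we can truncate the range of $n$ for which $N=11$ is genuinely uncovered, there are infinitely many symmetric powers to verify. The strategy is to revisit the quantitative bound $N>A_m^d$ underlying Theorem~\ref{thm:main_theorem_1}(2). For $\M=\mathrm{Sym}^n E$ we have $d=n+1$, $m=(n-1)/2$, and all Hodge numbers $h_\nu$ equal to $1$, so the archimedean analysis simplifies considerably; a sharper treatment of the resulting $\Gamma$-factors should show that $A_m^d$ stays uniformly below $11$ once $m$ exceeds an explicit constant $m_0$. This reduces case (i) to the finite range $3\leq n\leq 2m_0-1$ at $N=11$, which is cleared by the same numerical procedure as above. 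Finally, equidistribution of the zeros as $n\to\infty$ or $N\to\infty$ is inherited from Corollary~\ref{cor:sym^n}, since the exceptional set of pairs $(N,n)$ is finite and the equidistribution statement of the corollary is uniform in the remaining parameters.
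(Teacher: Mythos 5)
Your overall architecture is sound — reduce to Corollary~\ref{cor:sym^n} via modularity, then treat the remaining pairs $(N,n)$ by finite numerical verification — and this does mirror the paper's route. The proposal also correctly recognizes that the $n\in\{5,7\}$ cases with small conductor can be cleared by computing zeros directly. But the strategy you describe for case~(i), the $N=11$ situation, contains a genuine gap that would make the argument fail.

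You propose to sharpen the bound so that ``$A_m^d$ stays uniformly below $11$ once $m$ exceeds an explicit constant $m_0$.'' This cannot work: by \eqref{eqn:A_def} we have $A_m\geq 2\pi$ for all $m$, and since $d=n+1\to\infty$, the quantity $A_m^d$ diverges. No amount of sharpening of the $\Gamma$-factor analysis changes this. What you have missed is that in Theorem~\ref{thm:main_theorem_1}(2) the letter $N$ denotes the \emph{global conductor of the motive}, and for $\M=\mathrm{Sym}^n E$ with $E$ of squarefree conductor $N_E$ this is $N_E^{\,n}$, not $N_E$ (the paper states this explicitly in Section~\ref{sec:symmetric_powers}: ``the global conductor is $N^n$''). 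Hence the relevant inequality is $N_E^{\,n} > A_m^{\,n+1}$, equivalently $N_E > A_m^{(n+1)/n}$. Since $A_m\to 2\pi$ and $(n+1)/n\to 1$, the right-hand side tends to $2\pi\approx 6.28 < 11$, and already $A_4^{10/9}\leq 8^{10/9}\approx 10.1 < 11$, so every elliptic curve (all of which have $N_E\geq 11$) is covered for all odd $n\geq 9$. The case $n=3$ is likewise automatic from Theorem~\ref{thm:main_theorem_1}(1), which imposes no lower bound on the conductor. So in fact there is nothing to verify at $N=11$ beyond $n\in\{5,7\}$, and the list of exceptional pairs is exactly the one the paper gives: $n=5$ with $11\leq N\leq 43$ and $n=7$ with $11\leq N\leq 15$.

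One further remark: the paper does not simply grind out numerical roots for every exceptional pair. It observes (via LMFDB) that all but two of these pairs have root number $\varepsilon(\mathrm{Sym}^n f)=-1$, which forces $\Lambda(\frac{n+1}{2},\mathrm{Sym}^n f)=0$ and collapses $P_{\mathrm{Sym}^5 f}(z)$ to $z\big(\Lambda(5)z + 24\Lambda(4)\big)$, so the check becomes the single inequality $|24\Lambda(4)/\Lambda(5)|\leq 1$. Your plan of computing all roots of the self-inversive polynomial and checking $|z|=1$ would also work, but it is worth knowing that the paper's shortcut via $P_\M$ and \cite[Theorem~2.2]{EGR} (all zeros of $P_\M$ inside $\D$ implies all zeros of $p_\M$ on $\S$) considerably reduces the required precision.
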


In \cite{Manin}, Manin speculated on the existence of {\it zeta-polynomials} $Z(s)$ which (in analogy with expected behavior of the Riemann zeta function and $L(s,f)$) 
satisfy a functional equation of the form $Z(s)=\pm Z(1-s)$ and have all of their zeros lie on the line $\Re(s)=1/2$.  Furthermore, there should be a ``nice'' generating function for the sequence $\{Z(-n)\}_{n=1}^{\infty}$ along with an arithmetic-geometric interpretation of $Z(-n)$.  Manin constructed zeta-polynomials by applying the ``Rodriguez-Villegas transform'' \cite{RV} to the odd part of the period polynomial of a newform using the results in  \cite{MR3118875}; he suggests that these polynomials arise from non-Tate motives and geometric objects lying below $\mathrm{Spec}~\Z$ but not over $\mathbb{F}_1$.

Manin asked whether there exist zeta-polynomials which can be canonically constructed from the full period polynomial.  Ono, Rolen, and Sprung \cite{ORS} recently used the results in \cite{JMOS} to address this question, producing a large class of zeta-polynomials canonically constructed from the critical values of classical newforms $f$. Assuming the Bloch-Kato conjecture, these zeta-polynomials encode further Galois cohomological structure of Selmer groups for Tate-twists that have been assembled as Stirling complexes.  Moreover, in analogy with the Maclaurin expansion
\[
\frac{t}{e^t-1}=1-\frac{t}{2}+t\sum_{\ell=1}^{\infty}\zeta(-n)\cdot\frac{(-t)^\ell}{\ell!},
\]
the zeta-polynomials $Z_f(s)$ constructed in \cite{ORS} satisfy
\[
\frac{(\frac{\sqrt{N}}{i})^{k-1}r_f(\frac{z}{i\sqrt{N}})}{(1-z)^{k-1}}=\sum_{\ell=0}^{\infty}Z_f(-\ell)z^\ell.
\]

Using Theorem \ref{thm:sym^n}, we construct zeta-polynomials arising from the special values of odd symmetric power $L$-functions of semistable elliptic curves over $\Q$.  Using the Bloch-Kato conjecture, one can express the coefficients of these zeta-polynomials in terms of Tamagawa numbers and generalized Shafarevich-Tate groups of the symmetric powers.

\begin{theorem}
\label{thm:zeta}
Let $E/\Q$ be a non-CM elliptic curve, and let $n\geq3$ be odd.  Suppose that $L(s,\mathrm{Sym}^n E)$ is the $L$-function of an algebraic, tempered, cuspidal symplectic representation of $\mathrm{GL}_{n+1}(\mathbb{A}_{\Q})$.  Let $Z_{\mathrm{Sym}^n E}(s)$ be the polynomial defined by \eqref{eqn:Z}.  The following are true.
\begin{enumerate}
	\item For all $s\in\C$, we have that $Z_{\mathrm{Sym}^n E}(s)=\varepsilon(\mathrm{Sym}^n E)Z_{\mathrm{Sym}^n E}(1-s)$, where $\varepsilon(\mathrm{Sym}^n E)$ is the sign of the functional equation for $L(s,\mathrm{Sym}^n E)$.
	\item If $Z_{\mathrm{Sym}^n E}(\rho)=0$, then $\Re(\rho)=1/2$.
	\item We have the Maclaurin expansion
\[
\frac{p_{\mathrm{Sym}^n E}(z)}{(1-z)^n}=\sum_{\ell=0}^{\infty}Z_{\mathrm{Sym}^n E}(-\ell)z^{\ell}.
\]
\end{enumerate}
\end{theorem}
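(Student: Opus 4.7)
The plan is to realize $Z_{\mathrm{Sym}^n E}(s)$ as the Rodriguez--Villegas transform of $p_{\mathrm{Sym}^n E}(z)$ and to deduce (1)--(3) by standard manipulations, using Theorem \ref{thm:sym^n} as the essential input for (2). This mirrors the template of Ono, Rolen, and Sprung \cite{ORS}, but with $p_{\mathrm{Sym}^n E}(z)$ playing the role of $r_f(z/(i\sqrt{N}))$ and Theorem \ref{thm:sym^n} playing the role of the Jin--Ma--Ono--Soundararajan theorem.

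First I would address (3), which is essentially the \emph{definition} of $Z_{\mathrm{Sym}^n E}(s)$. Writing $p_{\mathrm{Sym}^n E}(z)=\sum_{j=0}^{n-1} c_j z^j$ and expanding $(1-z)^{-n}=\sum_{\ell\geq 0}\binom{\ell+n-1}{n-1}z^\ell$ by the negative binomial theorem, the Cauchy product gives
\[
\frac{p_{\mathrm{Sym}^n E}(z)}{(1-z)^n}=\sum_{\ell\geq 0}\Big(\sum_{j=0}^{n-1} c_j \binom{\ell+n-1-j}{n-1}\Big)z^\ell,
\]
and the inner sum is a polynomial in $\ell$ of degree $n-1$. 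Taking this polynomial, evaluated at $-s$, as the definition \eqref{eqn:Z} of $Z_{\mathrm{Sym}^n E}(s)$ makes (3) tautological.

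For (1), I would first translate the functional equation of the completed $L$-function $\Lambda(s,\mathrm{Sym}^n E)=\varepsilon(\mathrm{Sym}^n E)\,\Lambda(n+1-s,\mathrm{Sym}^n E)$ into the palindromic identity $z^{n-1}p_{\mathrm{Sym}^n E}(1/z)=\varepsilon(\mathrm{Sym}^n E)\,p_{\mathrm{Sym}^n E}(z)$, which one obtains from \eqref{stuff} by the same bookkeeping that yields the reciprocity of $r_f(z/(i\sqrt{N}))$ in the classical case \cite{JMOS}. This palindromy is equivalent to $c_j=\varepsilon(\mathrm{Sym}^n E)\,c_{n-1-j}$. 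Substituting into the formula from (3) and applying the upper-negation identity $\binom{-a-1}{n-1}=(-1)^{n-1}\binom{a+n-1}{n-1}$ yields, as an identity of polynomials in $s$,
\[
Z_{\mathrm{Sym}^n E}(1-s)=(-1)^{n-1}\varepsilon(\mathrm{Sym}^n E)\,Z_{\mathrm{Sym}^n E}(s);
\]
since $n$ is odd, $(-1)^{n-1}=1$, giving (1).

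Finally, (2) is the content of the Rodriguez--Villegas principle \cite{RV}: if $Q(z)$ is a polynomial of degree $\leq d-1$ which is reciprocal up to a sign and has all of its zeros on $\S$, then the polynomial $Z(s)$ defined via $Q(z)/(1-z)^d=\sum_{\ell\geq 0}Z(-\ell)z^\ell$ has all of its zeros on $\re(s)=1/2$. Applied with $Q=p_{\mathrm{Sym}^n E}$ and $d=n$, and combined with Theorem \ref{thm:sym^n} (zeros on $\S$) and the palindromy established in the previous step, this yields (2). The step I expect to be the main obstacle is the palindromy verification in (1): the Gamma factors and powers of $\sqrt{N}/(2\pi)$ entering $\Lambda(s,\mathrm{Sym}^n E)$ must be reconciled with the binomial and $i$-factors in \eqref{stuff} so that the reciprocity relation emerges with the exact sign $\varepsilon(\mathrm{Sym}^n E)$. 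Once this arithmetic bookkeeping is in place, the remainder of the argument is a formal generating-function calculation plus a direct invocation of the Rodriguez--Villegas principle.
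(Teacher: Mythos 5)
The proposal gets the broad architecture right and Part (3) is essentially identical to the paper's: both expand $(1-z)^{-n}$ and identify the Maclaurin coefficients with values of a degree-$\leq n-1$ polynomial. Your Part (1) argument is genuinely different from the paper's — you derive the functional equation directly from the palindromy $c_j=\varepsilon c_{n-1-j}$ and the upper-negation binomial identity, whereas the paper deduces Part (1) as a byproduct of Theorem \ref{thm:RV}. Your route for Part (1) is a nice formal alternative and does not require any nonvanishing input.

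The genuine gap is in Part (2). Theorem \ref{thm:RV} as stated requires $U(1)\neq 0$, and more fundamentally the Rodriguez--Villegas transform needs the degree of $U$ and the exponent in $(1-z)^{-(e+1)}$ to match exactly. Your stated version of the principle, ``$Q$ of degree $\leq d-1$, reciprocal, all zeros on $\mathbb{S}^1$, then $Q(z)/(1-z)^d$ produces $Z$ with zeros on $\Re(s)=1/2$,'' is not correct as a general statement: take $Q(z)=1+z$ and $d=3$; then $Q(z)/(1-z)^3=\sum_\ell (\ell+1)^2 z^\ell$, so $Z(s)=(1-s)^2$, whose zero is at $s=1$, not on $\Re(s)=1/2$. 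What saves the argument in the paper is the precise degree/vanishing accounting: $\deg p_{\mathrm{Sym}^n E}=n-1$ always, and when $\varepsilon=-1$ the zero of $p_{\mathrm{Sym}^n E}$ at $z=1$ is exactly simple, so one may write $p_{\mathrm{Sym}^n E}(z)=(1-z)^{\delta_{-1,\varepsilon}}\hat{p}(z)$ with $\hat{p}(1)\neq 0$ and apply Theorem \ref{thm:RV} to $\hat{p}$ with $e=n-1-\delta_{-1,\varepsilon}$. Establishing that the zero at $z=1$ is simple (and that $p_{\mathrm{Sym}^n E}(1)\neq 0$ when $\varepsilon=1$) is the content of Lemma \ref{lem:periods}, which uses Lemma \ref{lem:monotonicity} (monotonicity and nonnegativity of $\Lambda$-values to the right of the center) together with the nonvanishing of Deligne periods of the elliptic curve. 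Your proposal does not address this nonvanishing at all, and without it both the application of Rodriguez--Villegas and the exact value of the exponent in the functional equation (Part (1) via the RV route, i.e.\ $(-1)^e=\varepsilon$) would be unjustified.
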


We review motivic $L$-functions and their conjectured analytic properties in Section \ref{sec:motivic}.  In Section \ref{sec:prelims}, we prove some lemmas that are needed for the proofs of Theorem \ref{thm:main_theorem_1}, which we prove in Sections \ref{sec:proofs_1} and \ref{sec:large_m}.  We then discuss symmetric power $L$-functions and prove Theorems \ref{thm:sym^n} and \ref{thm:zeta} in Sections \ref{sec:symmetric_powers} and \ref{sec:zeta}.

\subsection*{Acknowledgements}

This project was suggested by Ken Ono; the authors thank him for his comments and support.  The authors also thank Seokho Jin, Robert Lemke Oliver, Akshay Venkatesh, and Don Zagier for helpful conversations.  S.L. thanks the Fulbright Commission for their generous support and the Department of Mathematics and Computer Science at Emory University for their hospitality.  W.M. thanks the Chinese Scholarship Council for its generous support.  J.T. is supported by a NSF Mathematical Sciences Postdoctoral Research Fellowship.

\section{Motivic $L$-functions}
\label{sec:motivic}

We begin by recalling the conjectural properties of motivic $L$-functions.  For more details, see Serre \cite{Ser} and Iwaniec and Kowalski \cite[Chapter 5]{IK}.

\subsection{Conjectured analytic properties}

Define a pure motive $\mathcal{M}$ over $\Q$ of weight $w $, rank $d$, and global conductor $N $ by specifying Betti, de Rham, and $\ell$-adic realizations (for each prime $\ell$)
\[
H_B(\M),\quad H_{dR}(\M),\quad H_{\ell}(\M)
\]
which are vector spaces of dimension $d$ over $\Q$, $\Q$, and $\Q_{\ell}$, respectively; each is endowed with additional structures and comparison isomorphisms as in \cite{333,555}.  In particular, $H_B(\M)$ admits an involution $\rho_B$, $H_{\ell}(\M)$ is a $\mathrm{Gal}(\bar{\Q}/\Q)$-module, and there is a Hodge decomposition into $\mathbb{C}$-vector spaces
\[
H_B(\M)\otimes\mathbb{C}=\bigoplus_{\substack{i+j=w  \\ i,j\geq0}}H^{i,j}(\M).
\]
The involution $\rho_B$ acts on $H^{i,j}(\M)$ by $\rho_B(H^{i,j}(\M))=H^{j,i}(\M)$.  When $w $ is even, this tells us that $H^{w /2,w /2}(\M)$ is invariant under $\rho_B$; when $w $ is odd, we take $H^{w /2,w /2}(\M)=\{0\}$.  If $w $ is even and $H^{w /2,w /2}(\M)\neq\{0\}$, then the involution $\rho_B$ acts on $H^{w /2,w /2}(\M)$ by $\alpha\in\{-1,1\}$; we then define the quantity $b^{\pm}(\M)$ by
\[
b^{\alpha}(\M):=\dim_{\C}\{x\in H^{w /2,w /2}(\M):\rho_B(x)=\alpha(-1)^{w /2}x\},\qquad \alpha\in\{-1,1\}.
\]
We denote by $\rho_{\ell}$ the representation which induces the $\mathrm{Gal}(\bar{\Q}/\Q)$-module structure on $H_{\ell}(\M)$.

For any prime $p$, let $\mathrm{Frob}_p\in\mathrm{Gal}(\bar{\Q}/\Q)$ be the Frobenius element at $p$, which is defined modulo conjugation and modulo the inertia subgroup $I_p\subset G_p\subset \mathrm{Gal}(\bar{\Q}/\Q)$ of the decomposition group $G_p$.  Define
\[
L_{\ell,p}(X,\M):=\det(1-X\cdot \rho_{\ell}(\mathrm{Frob}_p^{-1})|_{H_{\ell}(\M)^{I_p}})^{-1}=\prod_{j=1}^{d }(1-\alpha_{\M}(j,\ell,p)X)^{-1}.
\]
One typically assumes (and expects) that $L_{\ell,p}(X,\M)$ and $\alpha_{\M}(j,\ell,p)$ are in fact independent of $\ell$; as such, we write $L_{p}(X,\M)$ and $\alpha_{\M}(j,p)$ instead of $L_{\ell,p}(X,\M)$ and $\alpha_{\M}(j,\ell,p)$ for convenience.  (If this is not true, our results are only affected notationally.)  The Euler product and Dirichlet series representations of $L(s,\M)$ are now given as
\[
L(s,\M):=\prod_{p}L_p(p^{-s},\M)=:\sum_{n\geq 1}\frac{\lambda_\M(n)}{n^s}
\]
with $\lambda_\M(n)\in\C$. Both the Euler product and the Dirichlet series converge absolutely in the half-plane $\re(s)>w /2+1$.

Define the $\nu$-th Hodge number of $\M$ by $h_{\nu}:=\dim_{\mathbb{C}}H^{\nu,w -\nu}(\M)$.  Let $\Gamma_{\R}(s)=\pi^{-s/2}\Gamma(s/2)$ and $\Gamma_{\C}(s)=2(2\pi)^{-s}\Gamma(s)$, and define
\[
L_{\infty}(s,\M)=\Gamma_{\R}(s-w/2)^{b^+(\M)}\Gamma_{\R}(s+1-w/2)^{b^-(\M)}\prod_{0\leq \nu<w/2} \Gamma_{\C}(s-\nu)^{h_{\nu}}.
\]
Because we consider $\M$ over $\Q$, the degree of $L(s,\M)$ also equals
\begin{equation}
\label{eqn:d_def}
d=b^{+}(\M)+b^{-}(\M)+2\sum_{0\leq\nu< w /2} h_{\nu}.
\end{equation}

We now describe the hypotheses for $L(s,\M)$ which are crucial to our arguments.

\begin{hypothesis}
\label{hyp}
Let $\M$ be a self-dual motive of weight $w\geq1$, rank $d\geq1$, and global conductor $N$.  Let $L(s,\M)$ be the $L$-function of $\M$.  The following are true.
\begin{enumerate}
\item Self-duality: For all $n\geq 1$, we have that $\lambda_\M (n)\in\R$.
\item The generalized Ramanujan conjecture (GRC): We have that $|\lambda_{\M}(n)|\leq d(n)n^{w /2}$ for every $n\geq1$, where $d(n)$ is the usual divisor function.
\item Analytic continuation:  The function $\Lambda(s,\M):=N ^{s/2}L_{\infty}(s,\M)L(s,\M)$ is entire of order 1.
\item Functional equation:  There exists $\varepsilon(\M)\in\{-1,1\}$ such that for every $s\in\mathbb{C}$, we have that $\Lambda(s,\M)=\varepsilon(\M)\Lambda(w +1-s,\M)$.  We call $\varepsilon(\M)$ the root number of $\M$.
\item We have $\Lambda(\frac{w +1}{2},\M)\geq0$. 
\end{enumerate}
\end{hypothesis}

Property 5 follows from the Generalized Riemann Hypothesis for $L(s,\M)$, and it is known unconditionally in many cases.  Every other property of Hypothesis \ref{hyp} is immediately satisfied when $L(s,\M)$ coincides with the $L$-function $L(s,\pi_{\M})$ of an algebraic, self-dual, tempered, cuspidal automorphic representation $\pi_{\M}$ of $\mathrm{GL}_{d}(\A_{\Q})$, where $d$ is the rank of $\mathcal{M}$.   This is predicted by the Langlands program but is known unconditionally for a small (though highly important and useful) collection of motivic $L$-functions, such as the $L$-functions associated to newforms.  In what follows, we will always assume that $L(s,\M)=L(s,\pi_{\M})$ for some $\pi_{\M}$ in $\mathcal{A}_d(\Q)$, the set of all algebraic, self-dual, tempered, cuspidal automorphic representations of $\mathrm{GL}_d(\mathbb{A}_{\Q})$, where $d$ is the rank of $\mathcal{M}$.

\subsection{Critical values and Hodge numbers}

Following Deligne \cite{555}, we define an integer $n$ to be {\it critical} for $\M$ if neither $L_{\infty}(s,\M)$ nor $L_{\infty}(w+1-s,\M)$ has a pole at $s=n$; if $n$ is critical for $\M$, then we call $L(n,\M)$ a {\it critical value} of $L(s,\M)$.  With this definition, the critical integers are purely dictated by the Hodge numbers.  The simplest situation occurs when $b^+(\M)$ and $b^-(\M)$ both equal zero; then the set of integers $n$ which are critical for $\M$ are precisely those which lie in the interval
\begin{equation}
\label{eqn:def_critical_interval}
\Big(\max_{\substack{h_{\nu}\neq 0 \\ 0\leq\nu<w /2}}\nu,w -\max_{\substack{h_{\nu}\neq 0 \\ 0\leq\nu<w /2}}\nu\Big].
\end{equation}
(When $\M$ corresponds with a newform $f$ of (modular) weight $k $, then $w =k -1$, $h_{0}=1$, and $h_{\nu}=0$ for all $1\leq\nu<\frac{k -1}{2}$.  Thus the critical values of $L(s,f)$ are $L(n,f)$ for integers $1\leq n\leq k -1$.)  On the other hand, if at least one of $b^+(\M)$ and $b^-(\M)$ is nonzero, then the distribution of critical integers is slightly more complicated.  Briefly stated, if just one of $b^+(\M)$ and $b^-(\M)$ are nonzero, then the critical integers of $\M$ will not be consecutive integers; if both $b^+(\M)$ and $b^-(\M)$ are nonzero, then $L(s,\M)$ has no critical values.  For simplicity, we only consider motives $\M$ such that $w $ is odd and $h_{\nu}\geq1$ for some $0\leq\nu<w /2$.  Thus $b^+(\M)=b^-(\M)=0$, the integers that are critical for $\M$ are symmetric about the critical line for $L(s,\M)$, and $d\geq2$.  We will study polynomials that generate the values $L(1,\M)$, $L(2,\M),\ldots,L(w ,\M)$, which, by our hypotheses, includes all of the critical values.

When $w$ is odd, we see that $d$ must be even (see \eqref{eqn:d_def}).  Now, consider now the exterior square representation $\mathrm{Ext}^2(\pi_{\M})$ and the Euler product
\[
L(s,\mathrm{Ext}^2(\pi_{\M}))=\prod_p L_p(p^{-s},\mathrm{Ext}^2(\pi_{\M})),
\]
where at each prime $p\nmid N$ we have
\begin{equation}
\label{eqn:ext}
L_p(p^{-s},\mathrm{Ext}^2(\M))=\prod_{1\leq j<k\leq n}(1-\alpha_{\M}(j,p)\alpha_{\M}(k,p)p^{-s})^{-1}.
\end{equation}
We know that $L(s,\mathrm{Ext}^2(\pi_{\M}))$ has a meromorphic continuation to $\mathbb{C}$ with no poles outside of the set $\{\frac{w'}{2},\frac{w'}{2}+1\}$, where $w'$ is the weight of $\mathrm{Ext}^2(\pi_{\M})$ \cite{MS}.  If $L(s,\mathrm{Ext}^2(\pi_{\M}))$ has a pole at $s=\frac{w}{2}+1$, then $\pi_{\M}$ is a cuspidal symplectic representation of $\mathrm{GL}_{d }(\A_{\Q})$; let $\mathcal{A}_d^{\mathrm{s}}(\Q)$ denote the set of such representations.  For any $\pi_{\M}\in\mathcal{A}_d^{\mathrm{s}}(\Q)$, Lapid and Rallis \cite{LR} proved that $\Lambda(\frac{w +1}{2},\pi_{\M})\geq0$.  (This vastly generalizes a result of Waldsuprger \cite{Waldspurger} for $L$-functions of newforms.)  Therefore, the hypotheses of Theorem \ref{thm:main_theorem_1} succinctly describe the most natural class of motivic $L$-functions for which the methods in \cite{JMOS} can be used for studying special and critical values.

In Theorem \ref{thm:main_theorem_1}, we require that $2m^{h_m}\geq(1+1/m)^{h_0}$.  This not true of all $\mathcal{M}$.  In fact, for {\it any} integer $m\geq0$ and {\it any} collection of nonnegative integers $h_0,\ldots, h_{m}$, there exists a motive of weight $2m+1$ with Hodge numbers $h_0,\ldots, h_m$; see Arapura \cite{Arapura} and Schreieder \cite{Schreieder} for explicit constructions.  However, for newforms and their symmetric powers (see Section \ref{sec:symmetric_powers}) as well as many other interesting cases, we have $h_{\nu}\in\{0,1\}$ for each $1\leq \nu\leq m$.

\section{Preliminary Lemmas and Setup}
\label{sec:prelims}

Let $\M$ be a pure motive over $\Q$ of rank $d\geq 2$ with global conductor $N $, odd weight $w=2m+1\geq3$, root number $\varepsilon=\varepsilon(\M)$, and Hodge numbers $h_{\nu}$ for $0\leq\nu\leq m$.  (It will be more notationally convenient for us to use $m$ instead of $w$.)  For convenience, we let $\S:=\{z\in\C:|z|=1\}$ and $\D:=\{z\in\C:|z|<1\}$.

We now define our first analogue of \eqref{eqn:pp_newform} by letting
\begin{equation}
\label{stuff}
p_{\M}(z):=\sum_{j=0}^{2m} \Big[\prod_{\nu=0}^m {2m-\nu\choose m-|m-j|}^{h_{\nu}}\Big] \Lambda(2m+1-j,\M)z^{j}.
\end{equation}
  Using the functional equation of $\Lambda(s,\M)$ in Part (3) of Hypothesis \ref{hyp}, we have that
\begin{equation}
\label{eqn:p_def_func}
p_{\M}(z)=\varepsilon z^{m}(P_{\M}(z)+\varepsilon P_{\M}(1/z)),
\end{equation}
where
\[
P_{\M}(z):=\frac{1}{2}\Big[\prod_{\nu=0}^m{2m-\nu\choose m}^{h_{\nu}}\Big]\Lambda(m+1,\M)+\sum_{j=1}^{m}\Big[\prod_{\nu=0}^m{2m-\nu\choose m-j}^{h_{\nu}}\Big]\Lambda(m+1+j,\M)z^j.
\]
If $z=e^{i\theta}\in\S$, then $P_{\M}(z)+\varepsilon P_{\M}(1/z)$ is a trigonometric polynomial in either $\cos(\theta)$ or $\sin(\theta)$ (depending on the sign of $\varepsilon$).  Therefore, to prove that the zeros of $p_{\M}(z)$ are equidistributed on $\S$, we find the correct number and placement of sign changes of $P_{\M}(z)+\varepsilon P_{\M}(1/z)$ as $\theta$ varies along $[0,2\pi)$.

Since $\Lambda(s,\M)$ is an entire function of order one, there exist constants $A=A_{\M}$ and $B=B_{\M}$ such that $\Lambda(s,\M)$ has the Hadamard factorization
\begin{equation}
\label{eqn:hadamard2}
\Lambda (s,\M)=e^{A+Bs}\prod_{\rho}\Big(1-\frac{s}{\rho}\Big)e^{s/\rho},
\end{equation}
where the product runs over the zeros $\rho$ of $\Lambda(s,\M)$.  Self-duality and the functional equation of $\Lambda(s,\M)$ imply that if $\rho$ is a zero of $\Lambda(s,\M)$, then so are $\bar{\rho}$ and $w+1-\rho$.  Self-duality also implies that $\Lambda(s,\M)$ is real-valued on the real line, and in view of the functional equation of $\Lambda(s,\M)$, we have that $B$ is real-valued and $B=-\sum_{\rho}\re(\rho^{-1})=-\sum_{\rho}\re(\rho)|\rho|^{-2}$.  Thus if $s\in\R$, then
\begin{equation}
\label{eqn:hadamard}
\Lambda(s,\M)=e^{A}\Big[\prod_{\rho\in\R}\Big(1-\frac{s}{\rho}\Big)\Big]\cdot\Big[\prod_{\im(\rho)>0}\Big|1-\frac{s}{\rho}\Big|^2\Big].
\end{equation}

\begin{lemma}
\label{lem:monotonicity}
The function $\Lambda(s,\M)$ is monotonically increasing for $s\geq m+3/2$; moreover,
\[
0\leq\Lambda(m+1,\M)\leq \Lambda(m+2,\M)\leq \Lambda(m+3,\M)\leq \Lambda(m+4,\M) \leq \dots
\]
If $\varepsilon =-1$, then $\Lambda(m+1,\M)=0$ and 
\[
0\leq\Lambda(m+2,\M)\leq \frac{1}{2}\Lambda(m+3,\M) \leq\frac{1}{3}\Lambda(m+4,\M)\leq \dots
\]
\end{lemma}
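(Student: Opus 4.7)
The plan is to exploit the Hadamard factorization \eqref{eqn:hadamard} together with the fact that all zeros of $\Lambda(s,\M)$ lie in the critical strip $m+\frac12 \leq \re(\rho) \leq m+\frac32$. For real $s > m+\frac32$ the Euler product for $L(s,\M)$ converges absolutely by GRC (Part~(2) of Hypothesis~\ref{hyp}), and each local Euler factor is positive on the real line because self-duality pairs non-real Satake parameters into complex-conjugate pairs (giving $|1-\alpha p^{-s}|^{-2}$) while real parameters are forced to equal $\pm p^{w/2}$ (giving $(1\mp p^{w/2-s})^{-1} > 0$); hence $L(s,\M) > 0$ there. Combining this with $L_\infty(s,\M) > 0$ and the functional equation (Part~(4)) shows both that $\Lambda(s,\M) > 0$ on $(m+\frac32,\infty)$ and that the zeros are confined to the claimed strip.

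For the monotonicity on $[m+\frac32,\infty)$, I would take the logarithmic derivative of \eqref{eqn:hadamard} to obtain
\[
\frac{\Lambda'(s,\M)}{\Lambda(s,\M)} = \sum_{\rho\in\R}\frac{1}{s-\rho} + \sum_{\im(\rho)>0}\frac{2(s-\re(\rho))}{|s-\rho|^2}.
\]
Every zero satisfies $\re(\rho)\leq m+\frac32\leq s$, so each summand is nonnegative; combined with $\Lambda(s,\M) > 0$, this forces $\Lambda'(s,\M)\geq 0$ and hence $\Lambda(m+2,\M)\leq\Lambda(m+3,\M)\leq\cdots$.

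The main obstacle is the inequality $\Lambda(m+1,\M)\leq\Lambda(m+2,\M)$, since $m+1$ lies outside the region of monotonicity. I would compare these two values factor by factor through \eqref{eqn:hadamard}. A non-real zero $\rho = m+1+\delta+i\gamma$ with $|\delta|\leq\frac12$ contributes the ratio $((1-\delta)^2+\gamma^2)/(\delta^2+\gamma^2)$, which is $\geq 1$ since $(1-\delta)^2\geq\delta^2$ whenever $\delta\leq\frac12$. Each pair of real zeros $m+1\pm\beta$ with $\beta\in(0,\frac12]$ contributes the ratio $-(1-\beta^2)/\beta^2$, whose magnitude is at least $3$ but whose sign is negative. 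Since Hypothesis~\ref{hyp}(5) gives $\Lambda(m+1,\M)\geq 0$ and the preceding step gives $\Lambda(m+2,\M)\geq 0$, the number of real-zero pairs in the critical strip must be even (otherwise $\Lambda(m+1,\M)$ and $\Lambda(m+2,\M)$ would carry opposite signs), so the total product of ratios is positive and at least $1$; the degenerate case $\Lambda(m+1,\M)=0$ is immediate.

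For the sharper chain when $\varepsilon=-1$, the functional equation forces $\Lambda(s,\M)$ to have a zero of odd order $r\geq 1$ at $s=m+1$. Writing $\Lambda(s,\M) = (s-m-1)^r \widetilde\Lambda(s)$, the function $\widetilde\Lambda$ is entire, real on $\R$, and even about $s=m+1$; its Hadamard product differs from that of $\Lambda(s,\M)$ only by removal of the factor at $m+1$, and the same argument as above shows $\widetilde\Lambda$ is nondecreasing on $[m+\frac32,\infty)$. For $r=1$ the identity $\Lambda(m+1+j,\M)/j = \widetilde\Lambda(m+1+j)$ combined with monotonicity of $\widetilde\Lambda$ on $[m+2,\infty)$ yields the stated chain; higher odd orders $r\geq 3$ follow by the same reasoning applied to $\widetilde\Lambda$ together with the elementary monotonicity of $j^{r-1}$.
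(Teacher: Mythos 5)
Your proof is correct and rests on the same core mechanism as the paper's---the Hadamard factorization \eqref{eqn:hadamard2}/\eqref{eqn:hadamard} together with the localization of zeros to the vertical strip $|\re(\rho)-(m+1)|<1/2$---but you fill in steps the authors leave terse and, in two places, take a more elaborate route than necessary. For monotonicity, the paper simply notes that each factor $|1-s/\rho|$ in \eqref{eqn:hadamard} is increasing on $[m+3/2,\infty)$; your log-derivative identity $\Lambda'/\Lambda = \sum_\rho (s-\rho)^{-1}$ is a legitimate and arguably more transparent alternative. For the delicate step $\Lambda(m+1,\M)\le\Lambda(m+2,\M)$, the paper just observes $|1-(m+1)/\rho|\le|1-(m+2)/\rho|$ for every $\rho$ in the strip, hence $|\Lambda(m+1,\M)|\le|\Lambda(m+2,\M)|$; the signed inequality then drops out immediately from $\Lambda(m+1,\M)\ge0$ (Hypothesis~\ref{hyp}(5)) and $\Lambda(m+2,\M)>0$. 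Your parity bookkeeping on pairs of real zeros reaches the same conclusion but is more labor than is needed---the magnitude comparison already sidesteps all sign issues. Your explicit factorization $\Lambda(s,\M)=(s-m-1)^r\widetilde\Lambda(s)$ with odd $r$ and the observation that $\widetilde\Lambda$ inherits the positivity and monotonicity is a welcome elaboration of the paper's one-line remark for $\varepsilon=-1$. One small slip: real Satake parameters need not equal $\pm p^{w/2}$ (they merely satisfy $|\alpha|\le p^{w/2}$), but this does not affect your conclusion, since $(1-\alpha p^{-s})^{-1}>0$ for real $s>w/2$ regardless; alternatively, positivity of $\Lambda$ on $[m+3/2,\infty)$ already follows from \eqref{eqn:hadamard}, the absence of zeros there, and $\Lambda(s,\M)\to+\infty$ as $s\to\infty$, with no appeal to the Euler product needed.
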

\begin{proof}
All of the zeros in the product \eqref{eqn:hadamard} lie in the vertical strip $|m+1-\re(s)|<1/2$, and we see that $|1-s/\rho|$ is increasing for $s\geq m+3/2$.  Thus by \eqref{eqn:hadamard}, we have that $\Lambda(s,\M)$ is increasing for $s\geq m+3/2$.  Moreover, $|1-\frac{m+1}{\rho}|\leq|1-\frac{m+2}{\rho}|$, so $\Lambda(m+1,\M)\leq\Lambda(m+2,\M)$.  When $\varepsilon=-1$, we apply the same reasoning and take into account that $\Lambda(s,\M)$ has a zero of odd order at $s=m+1$.
\end{proof}

\begin{lemma}
\label{zetaest}
For $0<a<b$, we have 
\[
\frac{L(m+3/2+a,\M)}{L(m+3/2+b,\M)}\leq \Big(\frac{\zeta(1+a)}{\zeta(1+b)}\Big)^{d},
\]
where $\zeta(s)$ is the Riemann zeta function.
\end{lemma}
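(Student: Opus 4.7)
The plan is to compare both $L$-functions via their Euler products and reduce everything to $\zeta(s)$ using the generalized Ramanujan conjecture. First I would introduce analytically normalized Satake parameters $\beta_{j,p}:=\alpha_{\M}(j,p)\,p^{-(m+1/2)}$, which by part (2) of Hypothesis~\ref{hyp} satisfy $|\beta_{j,p}|\leq 1$ at every prime (with the convention that Euler factors at primes $p\mid N$ of degree less than $d$ are padded with zero parameters). Since $m+3/2+a>m+3/2=w/2+1$, both $L(m+3/2+a,\M)$ and $L(m+3/2+b,\M)$ lie in the region of absolute convergence; moreover, pairing complex-conjugate local factors shows each is a strictly positive real number, so the logarithms are well-defined and expand as
\[
\log L(m+3/2+u,\M)=\sum_p\sum_{j=1}^{d}\sum_{n\geq 1}\frac{\beta_{j,p}^{\,n}}{n\,p^{n(1+u)}}, \qquad u>0.
\]

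Next I would subtract this identity at $u=b$ from its value at $u=a$, which gives
\[
\log\frac{L(m+3/2+a,\M)}{L(m+3/2+b,\M)}=\sum_{p}\sum_{n\geq 1}\frac{c_n(p)}{n\,p^{n}}\left(\frac{1}{p^{na}}-\frac{1}{p^{nb}}\right), \qquad c_n(p):=\sum_{j=1}^{d}\beta_{j,p}^{\,n}.
\]
The weight $n^{-1}p^{-n}(p^{-na}-p^{-nb})$ is non-negative for $a<b$, so it suffices to establish the inequality $c_n(p)\leq d$. Self-duality (part (1) of Hypothesis~\ref{hyp}) forces $L_p(X,\M)$ to have real coefficients, and hence so does its logarithm as a power series in $X$, which in turn makes each $c_n(p)$ real; combined with $|\beta_{j,p}|\leq 1$ and the triangle inequality, this yields $c_n(p)\leq d$. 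Replacing $c_n(p)$ by $d$ and recognizing the resulting double series as the analogous Euler-product expansion of $d\bigl(\log\zeta(1+a)-\log\zeta(1+b)\bigr)$, I exponentiate to conclude.

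No real obstacle stands in the way of this argument; the bound is simply the one attained by the Dedekind zeta factor when all normalized Satake parameters are taken to equal $1$. The only small point to get right is that we need $c_n(p)\leq d$ rather than merely $|c_n(p)|\leq d$, so it is important to use self-duality to put $c_n(p)\in\R$ before invoking the triangle inequality. Beyond GRC and self-duality, no further structure of $\M$ is required.
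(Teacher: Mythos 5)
Your argument is correct and is essentially the same as the paper's: both reduce the ratio to a prime-sum comparison with $\zeta$ using GRC to bound the normalized Satake parameters by $1$ in absolute value, with self-duality supplying reality so that the triangle-inequality bound can be applied one-sidedly. The only cosmetic difference is that the paper bounds $-L'/L$ and integrates over $[a,b]$, whereas you expand $\log L$ directly into its Dirichlet series and subtract; these are the same computation.
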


\begin{proof}
The Euler product for $L(s,\M)$ gives rise to the function $\Lambda_{\M}(n)$ which is defined by the Dirichlet series identity
\[
-\frac{L'}{L}(s,\M)=\sum_{n=1}^{\infty}\frac{\Lambda_{\M}(n)}{n^s}.
\]
One sees that $|\Lambda_{\M}(n)|\leq d n^{w/2}\Lambda(n)$ for all $n\geq1$, where $\Lambda(n)$ is the usual von Mangoldt function; this estimate follows from Part (4) of Hypothesis \ref{hyp}.

Let $0<a\leq t\leq b$.  By the above discussion,
\[
\Big|-\frac{L'}{L}(m+3/2+t,\M)\Big|\leq \sum_{n= 1}^{\infty}\Big|\frac{\Lambda_{\M}(n)}{n^{1+t+w/2}}\Big|\leq d\sum_{n=1}^{\infty}\frac{\Lambda(n)}{n^{1+t}}=-d\frac{\zeta'}{\zeta}(1+t).
\]
Consequently, 
\[
\frac{L(m+3/2+a,\M)}{L(m+3/2+b,\M)} = \exp\Big(\int_a ^b -\frac{L'}{L}(m+3/2+t,\M)dt\Big) \leq \exp\Big(-d\int_a ^b \frac{\zeta'}{\zeta}(1+t)dt\Big),
\]
which equals the right hand side of the desired inequality.
\end{proof}

We will also use the following lemma due to P\'olya \cite{Polya} and Szeg\"o \cite{Szego} on the zeros of trigonometric polynomials.

\begin{lemma}
\label{sze}
If $0\leq a_0 \leq a_1 \leq \dots \leq a_{n-1} < a_n$, then the polynomial $\sum_{j=0}^n a_n\cos(n\theta)$ has exactly one zero in each interval $(\frac{2j-1}{2n+1}\pi, \frac{2j+1}{2n+1}\pi)$ for $1\leq j\leq n$.  Also, the polynomial $\sum_{j=1}^n a_n \sin(n\theta)$ has a zero at $\theta=0$ and exactly one zero in each interval $(\frac{2j}{2n+1}\pi, \frac{2(j+1)}{2n+1}\pi)$ for $1\leq j\leq n-1$.
\end{lemma}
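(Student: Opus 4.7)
The plan is to establish the cosine case first and then mirror the argument for the sine case. Set $C(\theta) := \sum_{j=0}^{n}a_j\cos(j\theta)$. The product-to-sum identity $2\sin(\theta/2)\cos(j\theta) = \sin((j+\tfrac{1}{2})\theta) - \sin((j-\tfrac{1}{2})\theta)$, followed by Abel summation, yields
\[
2\sin(\theta/2)\,C(\theta) = a_n\sin((n+\tfrac{1}{2})\theta) + a_0\sin(\theta/2) + \sum_{j=0}^{n-1}(a_j - a_{j+1})\sin((j+\tfrac{1}{2})\theta),
\]
in which each coefficient $a_j - a_{j+1}$ is nonpositive, and $a_{n-1}-a_n$ is strictly negative, by hypothesis.

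I then evaluate this identity at the candidate separating nodes $\theta_k := (2k-1)\pi/(2n+1)$ for $k=1,2,\ldots,n$. At these nodes $(n+\tfrac{1}{2})\theta_k = (2k-1)\pi/2$, so $\sin((n+\tfrac{1}{2})\theta_k) = (-1)^{k-1}$, making the leading contribution $a_n(-1)^{k-1}$ alternate in sign with $k$. The triangle inequality combined with the telescoping identity $a_0 + \sum_{j=0}^{n-1}(a_{j+1}-a_j) = a_n$ bounds the absolute value of the remaining terms by $a_n$; the strict inequality $a_{n-1} < a_n$ together with $\sin(\theta_k/2) < 1$ for $1 \leq k \leq n$ upgrades this to a strict bound, so $\operatorname{sign} C(\theta_k) = (-1)^{k-1}$. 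The intermediate value theorem produces a zero of $C$ in each interval $(\theta_k,\theta_{k+1})$ for $k=1,\ldots,n-1$, and evaluating at $\theta=\pi$, where $C(\pi) = \sum_{j=0}^n(-1)^j a_j$ has sign $(-1)^n$ by pairing consecutive differences, locates the final zero in $(\theta_n,\pi)$. Since $C$ is a trigonometric polynomial of degree $n$, it has at most $2n$ zeros on $[0,2\pi)$; combined with the symmetry $C(-\theta)=C(\theta)$, which caps the count in $(0,\pi)$ at $n$, these sign changes give precisely the zeros.

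For the sine polynomial $S(\theta) := \sum_{j=1}^n a_j\sin(j\theta)$, the zero at $\theta = 0$ is immediate. Applying the companion identity $2\sin(\theta/2)\sin(j\theta) = \cos((j-\tfrac{1}{2})\theta) - \cos((j+\tfrac{1}{2})\theta)$ and summing by parts puts $2\sin(\theta/2)S(\theta)$ in an analogous form, with leading term $-a_n\cos((n+\tfrac{1}{2})\theta)$. Evaluating at the nodes $\phi_k := 2k\pi/(2n+1)$ for $k=1,\ldots,n-1$ reveals an alternating-sign pattern by the same telescoping bound, locating $n-1$ interior zeros; the degree bound forces these to be all of them. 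The main obstacle I anticipate is confirming strictness in the bound $\bigl|a_0\sin(\theta_k/2) + \sum(a_j - a_{j+1})\sin((j+\tfrac{1}{2})\theta_k)\bigr| < a_n$ at every node, since several of the sines $\sin((j+\tfrac{1}{2})\theta_k)$ can simultaneously attain $\pm 1$; the verification must leverage $a_{n-1}<a_n$ and $\sin(\theta_k/2)<1$ in a carefully structured case analysis, and an identical subtlety will arise in the sine case.
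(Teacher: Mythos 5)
The paper offers no proof of this lemma---it is quoted as a classical result of P\'olya and Szeg\H{o}---so there is nothing internal to compare against; what you have written is a self-contained proof filling that gap, and it is essentially the standard summation-by-parts (Fej\'er-type) argument that underlies the P\'olya--Szeg\H{o} result. Your decomposition $2\sin(\theta/2)C(\theta)=a_n\sin((n+\tfrac12)\theta)+a_0\sin(\theta/2)+\sum_{j=0}^{n-1}(a_j-a_{j+1})\sin((j+\tfrac12)\theta)$ is correct, the sign evaluation at the nodes $\theta_k=(2k-1)\pi/(2n+1)$ is correct, and the closing count via the $2n$-zero bound for a degree-$n$ trigonometric polynomial plus the evenness of $C$ is exactly the right way to upgrade ``at least one'' to ``exactly one.''

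Two small points. First, the strictness obstacle you flag at the end is not actually an obstacle: you do not need to worry about several sines simultaneously attaining $\pm1$, because it suffices to isolate the single index $j=n-1$. Its coefficient $a_{n-1}-a_n$ is strictly negative by hypothesis, and $\bigl|\sin\bigl((n-\tfrac12)\theta_k\bigr)\bigr|<1$ holds at every node: $(n-\tfrac12)\theta_k=\tfrac{(2n-1)(2k-1)}{2(2n+1)}\pi$, and $(2n+1)\mid(2n-1)(2k-1)$ would force $(2n+1)\mid(2k-1)$ since $\gcd(2n-1,2n+1)=1$, which is impossible for $1\le k\le n$. Hence that one term already pushes the remainder strictly below $a_n$, regardless of what the other sines do; the triangle inequality gives $\le a_n$, this single term makes it $<a_n$. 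The identical divisibility argument settles strictness in the sine case. Second, in the sine case the separating nodes must run $\phi_k=2k\pi/(2n+1)$ for $k=1,\dots,n$ (not $n-1$), since the $n-1$ target intervals $(\phi_j,\phi_{j+1})$, $1\le j\le n-1$, have $\phi_n$ as their rightmost endpoint; with $n$ nodes you obtain $n-1$ sign changes and hence the $n-1$ interior zeros, which the degree bound (using $S(0)=S(\pi)=0$ and oddness of $S$) then shows are all of them.
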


\section{Proof of Theorem \ref{thm:main_theorem_1} when $N$ is large}
\label{sec:proofs_1}

Our proof of Theorem \ref{thm:main_theorem_1} is broken into two cases.  First we consider the case when $m=1$, in which case $P_{\M}(z)$ is linear.  Then we consider the case where $m\geq2$.

\subsection{Case 1:  $m=1$}
We have $P_{\M}(z)=\Lambda(3,\M)z + 2^{h_0-1}\Lambda(2,\M)$.  If $\varepsilon=-1$, then
\[
p_{\M}(z)=z^m(P_{\M}(z)+\varepsilon P_{\M}(1/z))=(z^2-1)\Lambda(3,\M).
\]
Since $-1$ and $1$ are the roots and they are clearly equidistributed on $\S$, Theorem \ref{thm:main_theorem_1}  is proven for all $d$ and all $N$.

On the other hand, if $\varepsilon=1$ and $z=e^{i\theta}$ for some $\theta\in[0,2\pi)$, then
\begin{equation}
\label{eqn:m=1_even}
z^m(P_{\M}(z)+\varepsilon P_{\M}(1/z))=2e^{i\theta}(\cos(\theta)\Lambda(3,\M)+2^{h_0-1}\Lambda(2,\M)).
\end{equation}
Since $\Lambda(2,\M)<\Lambda(3,\M)$ by Lemma \ref{lem:monotonicity}, \eqref{eqn:m=1_even} has two roots for $\theta\in[0,2\pi)$; these are the two values of $\theta$ for which $\cos\theta=-2^{h_0-1}\Lambda(2,\M)/\Lambda(3,\M)$, provided that $h_0\in\{0, 1\}$.  This places the roots of $p_{\M}(z)$ on $\mathbb{S}^1$.

We now show that the zeros of \eqref{eqn:m=1_even} are equidistributed when $N$ is large.  By the definition of $\Lambda(s,\M)$ and Lemma \ref{lem:monotonicity}, we have that $\Lambda(3,\M)\gg N^{3/2}$, whereas
\[
\Lambda(2,\M)\leq\sup_{t\in\R}|\Lambda(5/2+\epsilon+it,\M)|\ll N^{5/4+\epsilon}
\]
for any $\epsilon>0$.  (This uses the Phragm\'en-Lindel\"{o}f convexity bound for $L(s,\M)$ in the critical strip is given by \cite[Equation 5.21]{IK}.)  Therefore, $\Lambda(2,\M)/\Lambda(3,\M)\ll N^{-1/4+\epsilon}$, and so the corresponding values of $\theta$ tend to $\pi/2$ and $3\pi/2$.  Thus if $\varepsilon=1$, then the zeros of $p_{\M}(z)$ are $\pm i+O(N^{-1/4+\epsilon})$.

\subsection{Case 2: $m\geq2$}

We will show that if $N$ is sufficiently large and $2m^{h_m} \geq (1+1/m)^{h_0}$, then the zeros of $p_{\M}(z)$ are equidistributed on $\S$.  This follows as soon as we show that we can apply Lemma \ref{sze} to the real and imaginary parts of $P_{\M}(e^{i\theta})+\varepsilon P_{\M}(e^{-i\theta}) $.  So that we may apply Lemma \ref{sze}, we will verify that 
\[
\Big[\prod_{\nu=0}^m{2m-\nu\choose m-j}^{h_{\nu}}\Big]\Lambda(m+1+j,\M)<  \Big[\prod_{\nu=0}^m{2m-\nu\choose m-(j+1)}^{h_{\nu}}\Big]\Lambda(m+2+j,\M)
\]
for all $1\leq j\leq m-1$ and 
\[
\frac{1}{2}\Big[\prod_{\nu=0}^m{2m-\nu\choose m}^{h_{\nu}}\Big]\Lambda(m+1,\M)\leq \Big[\prod_{\nu=0}^m{2m-\nu\choose m-1}^{h_{\nu}}\Big]\Lambda(m+2,\M).
\]
By the definitions of $\Lambda(s,\M)$ and $d$, this is equivalent to
\begin{equation}
\label{eqn:11}
\frac{1}{(m-j)^{d/2}}L(m+j+1,\M) < \Big(\frac{N}{(2\pi)^{d}}\Big)^{1/2}L(m+j+2,\M)
\end{equation}
for each $1\leq j\leq m-1$ and
\begin{equation}
\label{eqn:22}
\frac{1}{2}\Big[\prod_{\nu=0}^m \frac{1}{m^{h_\nu}}\Big]\Lambda(m+1,\M) \leq \Big[\prod_{\nu=0}^m \frac{1}{(m+1-\nu)^{h_\nu}}\Big]\Lambda(m+2,\M).
\end{equation}

By Lemma \ref{zetaest} we have
\[
\frac{L(m+j+1,\M)}{L(m+j+2,\M)}\leq \Big(\frac{\zeta(j+1/2)}{\zeta(j+3/2)}\Big)^{d}.
\]
Therefore, \eqref{eqn:11} is satisfied when $N> A_m^d$, where
\begin{equation}
\label{eqn:A_def}
A_m :=\max_{1\leq j\leq m-1}\frac{2\pi }{m-j}\cdot\Big(\frac{\zeta(j+1/2)}{\zeta(j+3/2)}\Big)^2.
\end{equation}
Since $\Lambda(m+1,\M) \leq \Lambda(m+2,\M)$, \eqref{eqn:22} is satisfied when $2m^{h_m} \geq (1+1/m)^{h_0}$, as can be seen using term-by-term comparison.  This completes the proof.

It is straightforward to compute $A_2\leq 23.83$, $A_3\leq 11.92$, $A_m\leq 8$ for $m\geq4$, and $\lim_{m\to\infty} A_m=2\pi$.  Thus the above proof cannot produce a lower bound for $N$ better than $(2\pi)^d$; we must handle the cases where $N\leq A_m^d$ differently.

\section{Proof of Theorem \ref{thm:main_theorem_1} when $m$ is large}
\label{sec:large_m}

On the unit circle, $r_f(z)$ is well-approximated by an exponential function \cite[Section 6]{JMOS}, but if $\M$ is arbitrary, then $p_{\M}(z)$ is well-approximated on the unit circle by a certain generalized hypergeometric function. Unfortunately, it is computationally intractable to locate the zeros of the real and imaginary parts of generalized hypergeometric functions, and Rouch\'e's Theorem only gives us the zeros of the real and imaginary part simultaneously. Therefore, we can only prove that ``most'' zeros (depending on $d$ and $N$) lie on the unit circle as the weight becomes large.

Let $d$ be fixed.  If we define
\begin{equation}
\label{qdef}
Q_{\M} (z):=z^m\sum_{j=0}^{m-1}\frac{1}{(j!)^{\frac{d}{2}}}\frac{(2\pi)^{\frac{dj}{2}}}{(\sqrt{N}z)^j}\frac{L(2m+1-j,\M)}{L(2m+1,\M)}+\frac{1}{2(m!)^{d/2}}\Big(\frac{(2\pi)^{d/2}}{\sqrt{N}}\Big)^{m}\frac{L(m+1,\M)}{L(2m+1,\M)},
\end{equation}
then we may write $P_{\M}(z)$ as
\begin{equation}
\label{eqn:PM_def}
P_{\M} (z)=\Big[\prod_{\nu=0}^{m}((2m-\nu)!)^{h_\nu}\Big]\Big(\frac{\sqrt{N}}{(2\pi)^{d/2}}\Big)^{2m+1}L(2m+1,\M)Q_{\M}(z).
\end{equation}
Define
\begin{equation}
\label{fdndef}
F_{d,N}(z):=\sum_{j=0}^{\infty} \frac{1}{(j!)^{d/2}}\Big(\frac{(2\pi)^{d/2}}{\sqrt{N}}z\Big)^j,
\end{equation}
which we approximate by its partial sums $T_{m,d,N} (z):= \sum_{j= 0}^m \frac{1}{(j!)^{d/2}}(\frac{(2\pi)^{d/2}}{\sqrt{N}}z)^j$.

Now we decompose $Q_{\M}(z)$ into the sum
\begin{equation}
\label{eqn:Q_decomp}
Q_{\M} (z)= z^m T_{m,d,N}(1/z) + S (z)+ \frac{1}{2(m!)^{d/2}}\Big(\frac{(2\pi)^{d/2}}{\sqrt{N}}\Big)^{m}\frac{L(m+1,\M)}{L(2m+1,\M)}
\end{equation}
with
$$
S(z):= z^m\sum_{j=0}^{m-1}\frac{1}{(j!)^{d/2}}\Big(\frac{(2\pi)^{d/2}}{\sqrt{N}z}\Big)^j\Big(\frac{L(2m+1-j,\M)}{L(2m+1,\M)}-1\Big).
$$
It follows from \cite[Theorem 2.2]{EGR} that $p_\M(z)$ has as many zeros on $\S$ as $Q_{\M} (z)$ has inside $\D$. Thus Part 3 of Theorem \ref{thm:main_theorem_1} follows from the following statement.

\begin{theorem}
\label{thm:largem}
Let $c_{d,N}$ denote the number of zeros of $F_{d,N}(z)$ inside $\D$. If $m$ is sufficiently large, then $Q_{\M}(z)$ has $m-c_{d,N}$ zeros inside $\D$.
\end{theorem}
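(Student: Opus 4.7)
The plan is to apply Rouch\'e's theorem on $\S$ to the decomposition \eqref{eqn:Q_decomp}, written as
\[
Q_{\M}(z) = G_m(z) + R(z), \qquad G_m(z) := z^m T_{m,d,N}(1/z),
\]
where $R(z)$ comprises the two remaining terms of \eqref{eqn:Q_decomp}. First I would count the zeros of $G_m(z)$ in $\D$: the substitution $w = 1/z$ identifies zeros of $G_m$ in $\D$ with zeros of $T_{m,d,N}(w)$ in $|w|>1$ (noting that $z=0$ is not a zero of $G_m$, as the leading coefficient of $T_{m,d,N}$ is nonzero), so the count equals $m$ minus the number of zeros of $T_{m,d,N}$ in $\overline{\D}$. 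Since $T_{m,d,N}\to F_{d,N}$ uniformly on compact subsets of $\C$, Hurwitz's theorem (under the tacit assumption that $F_{d,N}$ does not vanish on $\S$) implies that for $m$ sufficiently large, $T_{m,d,N}$ has exactly $c_{d,N}$ zeros in $\overline{\D}$, so $G_m$ has exactly $m - c_{d,N}$ zeros in $\D$.

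Next I would obtain a uniform lower bound $|G_m(z)| \geq \delta$ for $z \in \S$ and $m$ large, with $\delta>0$ depending only on $d$ and $N$. On $\S$ we have $|G_m(z)|=|T_{m,d,N}(\overline z)|$, and uniform convergence of $T_{m,d,N}$ to the continuous nonvanishing function $F_{d,N}$ on $\S$ supplies such a $\delta$ (e.g.\ $\tfrac12\min_{|w|=1}|F_{d,N}(w)|$).

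The crux is showing $|R(z)|=o(1)$ uniformly on $\S$ as $m\to\infty$. The constant term of $R(z)$ is bounded by $\tfrac12(m!)^{-d/2}((2\pi)^{d/2}/\sqrt N)^{m}|L(m+1,\M)|/|L(2m+1,\M)|$; here $|L(2m+1,\M)|\geq\tfrac12$ for $m$ large by the absolutely convergent Dirichlet series, $|L(m+1,\M)|$ grows only polynomially in $m$ and $N$ by the Phragm\'en--Lindel\"of convexity bound, and $(m!)^{-d/2}$ beats both super-exponentially. For $|S(z)|$ on $\S$, estimated by
\[
\sum_{j=0}^{m-1}\frac{1}{(j!)^{d/2}}\left(\frac{(2\pi)^{d/2}}{\sqrt{N}}\right)^{j}\left|\frac{L(2m+1-j,\M)}{L(2m+1,\M)}-1\right|,
\]
I split at $J=\lfloor\sqrt{m}\rfloor$. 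For $j\leq J$, integrating the bound $|L'/L(s,\M)|\leq -d\,\zeta'/\zeta(s-w/2)$ from the proof of Lemma \ref{zetaest} from $s=2m+1-j$ to $s=2m+1$ yields $|\log L(2m+1-j,\M)/L(2m+1,\M)|\leq d\log\zeta(m+1/2-j)=O(d\cdot 2^{-(m-\sqrt{m})})$ uniformly, which after summation against $F_{d,N}(1)$ contributes $o(1)$. For $J<j\leq m-1$, Lemma \ref{zetaest} gives the crude but uniform bound $L(2m+1-j,\M)/L(2m+1,\M)\leq\zeta(3/2)^d$, and the tail $\sum_{j>J}(j!)^{-d/2}((2\pi)^{d/2}/\sqrt N)^{j}$ of the convergent series $F_{d,N}(1)$ tends to zero as $J\to\infty$. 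Combining the two regimes gives $|S(z)|=o(1)$ on $\S$.

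With $|G_m(z)|\geq\delta$ and $|R(z)|<\delta$ on $\S$ for $m$ sufficiently large, Rouch\'e's theorem implies $Q_\M(z)$ has the same number of zeros in $\D$ as $G_m(z)$, namely $m-c_{d,N}$. The principal obstacle is obtaining uniform control of $|L(2m+1-j,\M)/L(2m+1,\M)-1|$ across $0\leq j\leq m-1$: this forces the split-range argument, since for $j$ close to $m-1$ the argument $2m+1-j$ approaches the edge of absolute convergence and the ratio is not near $1$, so only the factorial decay of the coefficients rescues the sum. A secondary technical point is that the argument tacitly requires $F_{d,N}$ to have no zeros on $\S$; without this, one either perturbs the contour to $|z|=1\pm\epsilon$ (where $\epsilon$ avoids the finitely many zeros of $F_{d,N}$ in a neighborhood of $\S$) or reformulates the statement to separate closed-disk from open-disk counts.
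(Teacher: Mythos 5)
Your proposal is correct and follows essentially the same route as the paper: decompose $Q_{\M}$ via \eqref{eqn:Q_decomp}, bound the remainder on $\S$, invoke locally uniform convergence of $T_{m,d,N}$ to $F_{d,N}$ to get the lower bound and the zero count, apply Rouch\'e, and perturb the contour if $F_{d,N}$ vanishes on $\S$. The only deviation is your split-range estimate of $|S(z)|$ at $J=\lfloor\sqrt m\rfloor$; the paper instead notes that $x\mapsto 2^x\bigl(\zeta(\tfrac12+x)^d-1\bigr)$ is decreasing for $x\geq1$, which yields the single uniform bound $\zeta(m+\tfrac12-j)^d-1\leq 2^{1-(m-j)}(\zeta(3/2)^d-1)$ across the entire range $0\leq j\leq m-1$ and hence $|S(z)|\ll 2^{-m}(\zeta(3/2)^d-1)F_{d,N}(2)$ in one step, avoiding the split.
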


\begin{proof}
We use Rouch\'e's Theorem. First, for $|z|=1$, we estimate with Lemma \ref{zetaest}

\begin{align*}
|S (z)| &\leq \sum_{j=0}^{m-1}\frac{1}{(j!)^{d/2}}\Big(\frac{(2\pi)^{d/2}}{\sqrt{N}}\Big)^j \Big(\frac{L(2m+1-j,\M)}{L(2m+1,\M)}-1\Big) \\
&\leq \sum_{j=0}^{m-1}\frac{1}{(j!)^{d/2}}\Big(\frac{(2\pi)^{d/2}}{\sqrt{N}}\Big)^j \Big(\zeta\Big( m+ \frac{1}{2}-j\Big)^{d} -1\Big)
\end{align*}
The function $x\mapsto 2^x (\zeta(\frac{1}{2}  + x)^{d} -1)$ is monotonically decreasing for $x\geq 1$, so
\begin{equation}
\label{eqn:Sz_bound_1}
|S (z)| \leq  \sum_{j=0}^{m-1}\frac{4}{(j!)^{d/2}}\Big(\frac{(2\pi)^{d/2}}{\sqrt{N}}\Big)^j 2^{j-m}(\zeta(3/2)^{d} -1) < 2^{2-m}(\zeta(3/2)^{d} -1) F_{d,N}(2).
\end{equation}
Furthermore,
\begin{equation}
\label{eqn:Sz_bound_2}
\frac{1}{2(m!)^{d/2}}\Big(\frac{(2\pi)^{d/2}}{\sqrt{N}}\Big)^{m}\frac{L(m+1,\M)}{L(2m+1,\M)}\ll \frac{1}{(m!)^{d/2}}\Big(\frac{(2\pi)^{d/2}}{\sqrt{N}}\Big)^{m}.
\end{equation}
If $d$ is fixed, then both \eqref{eqn:Sz_bound_1} and \eqref{eqn:Sz_bound_2} can be made arbitrarily small if $m$ is sufficiently large. \\

We first assume that $F_{d,N}$ has no zeros on $\S$. Since $T_{m,d,N}(z)$ converges to $F_{d,N}(z)$ locally uniformly as $m$ tends to infinity, we have 
$$
\min_{z\in\S}\left|z^m T_{m,d,N}(1/z)\right| = \min_{z\in\S}\left|T_{m,d,N}(z)\right| > \frac{1}{2}\min_{z\in\S}\left|F_{d,N}(z)\right|
$$
for $m$ large enough. We conclude for these $m$, the functions $Q_{\M}(z)$ and $z^m T_{m,d,N}(1/z)$ have the same number of zeros inside $\D$ by Rouch\'e's Theorem.  Every zero of $z^m T_{m,d,N}(1/z)$ inside $\D$ is the inverse of a zero of $T_{m,d,N}(z)$ outside $\D$. Again using locally uniform convergence, we see that, if $m$ is sufficiently large, then $F_{d,N}(z)$ and $T_{m,d,N}(z)$ have the same number of zeros inside $\D$, namely $c_{d,N}$. This implies that $z^m T_{m,d,N}(1/z)$, and hence $Q_{\M}(z)$, has $m-c_{d,N}$ zeros inside $\D$.

If $F_{d,N}$ has zeros on $\S$, then we choose an $r>1$, such that all the zeros of $F_{d,N}$ in the region $\{ r^{-1}\le |z| \le r\}$ lie on $\S$ and slightly modify the argument above by applying Rouch\'e's Theorem to the circle $\{|z|=r\}$.
\end{proof}

By taking $d=2$, we have that $F_{2,N}(z)=\exp(\frac{2\pi}{\sqrt{N}}z)$.  Since $F_{2,N}(z)$ has no zeros in $\D$, we have that $c_{d,N}=0$; thus $p_{\M}(z)$ has all of its zeros on $\mathbb{S}^1$, as shown in \cite{JMOS}.  However, for $d=4$, the situation already becomes noticeably more complicated; when $d=4$, we have that $F_{4,N}(z)=I_0(4\pi N^{-1/4}\sqrt{z})$, where $I_0$ denotes the $I$-Bessel function.  When $d\geq 6$, $F_{d,N}(z)$ is a generalized hypergeometric function.  To illustrate the difficulty when $d\geq4$, we directly compute
\[
c_{4,N}=\begin{cases}
4&\mbox{if $N=1$},\\
3&\mbox{if $2\leq N\leq 4$},\\
2&\mbox{if $5\leq N\leq 26$},\\
1&\mbox{if $27\leq N\leq 745$},\\
0&\mbox{if $746\leq N$},
\end{cases}
\qquad
c_{6,N}=\begin{cases}
	5&\mbox{if $N=1$},\\
	4&\mbox{if $2\leq N\leq 6$},\\
	3&\mbox{if $7\leq N\leq 37$},\\
	2&\mbox{if $38\leq N\leq 494$},\\
	1&\mbox{if $495\leq N\leq 45606$},\\
	0&\mbox{if $45607\leq N$}.
\end{cases}
\]
To see how these compare with those of the previous section, we observe that $746\approx\frac{1}{2}(2\pi)^4$ and $45607\approx\frac{3}{4}(2\pi)^6$.  Thus it appears that the weight aspect of the results in \cite{JMOS} do not readily generalize to our setting when $d$ is large.

\section{Symmetric Power $L$-functions and the Proof of Theorem \ref{thm:sym^n}}
\label{sec:symmetric_powers}

\subsection{Symmetric power $L$-functions of non-CM newforms}

Let $f$ be a non-CM newform of even weight $k \geq2$, squarefree level $N$, and trivial nebentypus.  It is well-known that $L(s,f)$ is a motivic $L$-function satisfying Hypothesis \ref{hyp} with weight $w=k -1$, rank $d=2$, and global conductor $N$. (See \cite{JMOS} and the sources contained therein).

For each prime $\ell$, Deligne proved that there exists a representation $\rho_{\ell}:\mathrm{Gal}(\bar{\Q}/\Q)\to\mathrm{GL}_2(\Z_{\ell})$ with the property that if $p$ is a prime not dividing $\ell N$ and $\mathrm{Frob}_p$ is the Frobenius automorphism of $\mathrm{Gal}(\bar{\Q}/\Q)$ at $p$, then the characteristic polynomial of $\rho_{d}(\mathrm{Frob}_p)$ is $x^2-a_f(p)+p^{k-1}$.  By Deligne's proof of the Weil Conjectures (which establishes Part 2 of Hypothesis \ref{hyp}), we know that $|a_f(p)|\leq 2p^{(k-1)/2}$.  Thus the roots of the characteristic polynomial are $\alpha_p p^{(k-1)/2}$ and $\beta_p p^{(k-1)/2}$, where $\beta_p=\bar{\alpha}_p$ and $\alpha_p\beta_p=1$.  We recast the Euler product of $L(s,f)$ in \eqref{eqn:L(s,f)} as
\[
L(s,f)=\Big(\prod_{p\mid N }\frac{1}{1-a_f(p)p^{-s}}\Big)\prod_{p\nmid N }\prod_{j=0}^1\frac{1}{1-\alpha_p^j\beta_p^{1-j}p^{(k -1)/2-s}},
\]

When $N$ is squarefree, the Euler product of the $n$-th symmetric power of $f$, which we denote by $\mathrm{Sym}^n f$, is given by
\[
L(s,\mathrm{Sym}^n f)=\Big(\prod_{p\mid N }\frac{1}{1-a_f(p)^n p^{-s}}\Big)\prod_{p\nmid N }\prod_{j=0}^n\frac{1}{1-\alpha_p^j \beta_p^{n-j}p^{n(k -1)/2-s}}.
\]
(See Cogdell and Michel \cite[Section 1.1]{CM}.)  This is the $L$-function attached to the $\ell$-adic realizations of $\M=\mathrm{Sym}^n H^1(f)$; note that $L(s,\mathrm{Sym}^0 f)=\zeta(s)$ and $L(s,\mathrm{Sym}^1 f)=L(s,f)$.  The symmetric power $L$-functions of newforms determine the distribution of $a_f(p)/(2p^{(k-1)/2})$ in $[-1,1]$, but very little is known about their analytic properties (cf. \cite{sato-tate,Mazur}, for example).  Their critical values are important in the context of the Bloch-Kato conjecture, much like those of $L(s,f)$.  (See \cite{Watkins} for an accessible overview along with some convincing computations.)  The weight of $\mathrm{Sym}^n f$ is $n(k-1)$, the rank is $n+1$, and the global conductor is $N^n$.  (It is for this reason, and this reason alone, that we restrict $N$ to be squarefree.)    When $n=2r+1$ is odd, the integers which are critical for $\mathrm{Sym}^{2r+1}f$ are $r(k -1)+j$ for $1\leq j\leq k -1$.  The Hodge numbers all lie in $\{0,1\}$; see \cite{CM} for an exact expression for $L_{\infty}(s,\mathrm{Sym}^n f)$. From this we can check that the conditions of Theorem \ref{thm:main_theorem_1} (1) or (2) are satisfied under the assumptions of Corollary \ref{cor:sym^n}.

Conjecturally, we have $\mathrm{Sym}^n f\in\mathcal{A}_{n+1}(\Q)$ for each $n\geq 0$, and $\mathrm{Sym}^n f\in\mathcal{A}_{n+1}^{\mathrm{s}}(\Q)$ for each odd $n\geq1$.  Unconditionally, we know that $\mathrm{Sym}^n f\in\mathcal{A}_{n+1}(\Q)$ for each $n\leq 8$ (see Clozel and Thorne \cite{CT}, \cite{CM}, and the sources contained therein).  Moreover, as part of the celebrated proof of the Sato-Tate conjecture \cite{sato-tate}, we know that $L(s,\mathrm{Sym}^{n}f)$ can be analytically continued to the line $\Re(s)=1$ for each $n\geq1$.  It follows from the Euler product representation of $L(s,\mathrm{Sym}^n f)$ and \eqref{eqn:ext} that if $n\geq1$ is odd, then
\[
L(s,\mathrm{Ext}^2(\mathrm{Sym}^n f))=\zeta(s)\prod_{j=1}^{\frac{n-1}{2}}L(s,\mathrm{Sym}^{4j}f).
\]
In particular, if $n$ is odd and $\mathrm{Sym}^{n}f\in\mathcal{A}_{n+1}(\Q)$, then $L(s,\mathrm{Ext}^2(\mathrm{Sym}^n f))$ has a pole at $s=1$.  Thus by Lapid and Rallis \cite{LR}, we have that $\Lambda(\frac{n(k-1)+1}{2},f)\geq0$.  For $n=1$ and $n=3$, these results were proved by Waldspurger \cite{Waldspurger} and Kim \cite{Kim}, respectively.  Regardless of whether $N$ is squarefree, we expect that $L(s,\mathrm{Ext}^2(\mathrm{Sym}^n f))$ has a pole at $s=1$ for all odd $n\geq 1$, in which case $\mathrm{Sym}^n f\in\mathcal{A}_{n+1}^s(\Q)$ and we obtain the desired nonvanishing at the central critical point.

\subsection{Proof of Theorem \ref{thm:sym^n}}
By the modularity theorem, if $E$ is a semistable elliptic curve of squarefree conductor $N$, then $E$ corresponds to a weight 2 newform of level $N$, trivial nebentypus, and integral Fourier coefficients.  Thus $L(s,\mathrm{Sym}^n E)=L(s,\mathrm{Sym}^n f)$. By Corollary \ref{cor:sym^n}, the only cases left to check are 
\[
n=5, ~11\leq N\leq 43
\]
and 
\[
n=7,~11\leq N\leq 15.
\]
We observe that in all of these exceptional cases except for $(n,N)\in\{(5,37), (5,43)\}$, corresponding to the isogeny classes 37.a and 43.a in Cremona's table, the root number $\varepsilon(\mathrm{Sym}^n f)$ is $-1$; these are stored on the $L$-function and Modular Form Database (LMFDB) website at \url{http://www.lmfdb.org}. 

In the cases with $\varepsilon(\mathrm{Sym}^n f)=1$ (resp. $n=7$), we explicitly compute the zeros of $P_{\mathrm{Sym}^5 f}$ (resp. $P_{\mathrm{Sym}^7 f}$) and observe that all of them lie in the open unit disc. For this, we use the critical value $L(3,\mathrm{Sym}^5 f)$ and the Dirichlet coefficients of $L(s,\mathrm{Sym}^5 f)$ (resp. $L(s,\mathrm{Sym}^7 f)$), which are stored in the Lcalc files on \url{http://www.lmfdb.org}.

If $n=5$ and $\varepsilon(\mathrm{Sym}^5 f)=-1$, we have 
\[
P_{\mathrm{Sym}^5 f}(z)=\Lambda(5,\mathrm{Sym}^5 f)z^2+24\Lambda(4,\mathrm{Sym}^5 f)z,
\] 
so $P_{\mathrm{Sym}^5 f}$ has all zeros inside the unit disc, if 
\[
\Big|\frac{24\Lambda(4,\mathrm{Sym}^5 f)}{\Lambda(5,\mathrm{Sym}^5 f)}\Big|\le 1.
\]
This can again be checked by computing $L(4,\mathrm{Sym}^5 f)$ and $L(5,\mathrm{Sym}^5 f)$ in these cases. 

\section{Proof of Theorem \ref{thm:zeta}}
\label{sec:zeta}

We first present some corollaries of the results in \cite{RV}.  Let $U(z)$ be a polynomial of degree $e$ with $U(1)\neq0$.  Consider the rational function $V(z):=U(z)(1-z)^{-(e+1)}$.  It is easily shown that there exists a polynomial $H(z)$ of degree $e$ such that $H(\ell)=\frac{1}{
\ell!}\frac{d^{\ell}}{dz^{\ell}}V(z)\big|_{z=0}$ for each integer $\ell\geq0$.  Define $Z(s):=H(-s)$.
\begin{theorem}[Rodriguez-Villegas]
\label{thm:RV}
If all of the roots of $U$ lie on $\mathbb{S}^1$, then all of the roots of $Z(s)$ lie on the line $\Re(s)=1/2$.  Moreover, if $U$ has real coefficients and $U(1)\neq0$, then $Z(s)$ satisfies the functional equation $Z(1-s)=(-1)^e Z(s)$.	
\end{theorem}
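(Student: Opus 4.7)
The plan is to establish polynomiality of $H$, then the functional equation, and finally the localization of zeros. For polynomiality, expand $(1-z)^{-(e+1)}=\sum_{\ell\geq 0}\binom{\ell+e}{e}z^\ell$ and write $U(z)=\sum_{j=0}^{e}u_j z^j$; the $\ell$-th Maclaurin coefficient of $V$ is then $\sum_j u_j\binom{\ell-j+e}{e}$, a polynomial in $\ell$ of degree exactly $e$ with leading coefficient $U(1)/e!\neq 0$. This identifies $H$, and I will rewrite $H(\ell)=\sum_{i=0}^{e}\alpha_i\binom{\ell}{i}$ in the basis of falling factorials for what follows.

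To obtain the functional equation, set $W(z):=\sum_{m\geq 0}H(-m-1)z^m$. Using $\binom{-m-1}{i}=(-1)^i\binom{m+i}{i}$, one computes $W(z)=\sum_{i}(-1)^i\alpha_i(1-z)^{-(i+1)}$, while $V(z)=\sum_{i}\alpha_i z^i(1-z)^{-(i+1)}$; a direct substitution yields the general rational-function identity $V(1/z)=-zW(z)$. When $U$ has real coefficients and all roots on $\S$, complex roots pair as $\{\zeta,1/\zeta\}$ and any real root must be $\pm 1$, forcing $U$ to be palindromic in the strict sense $z^e U(1/z)=U(z)$. A short calculation then gives $V(1/z)=(-1)^{e+1}zV(z)$, and combining with $V(1/z)=-zW(z)$ yields $H(-m-1)=(-1)^eH(m)$ for all $m\geq 0$; setting $Z(s)=H(-s)$ translates this to $Z(1-s)=(-1)^e Z(s)$.

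For the zeros---the main content of the theorem---my plan is to start from $H(\ell)=\frac{1}{2\pi i}\oint V(z)z^{-\ell-1}dz$, move the contour via $z\mapsto 1/w$, and collect the single residue at $w=1$ afforded by palindromicity, arriving at
\[
Z(s)=\frac{1}{e!}\frac{d^e}{dw^e}\bigl[U(w)w^{-s}\bigr]\bigg|_{w=1}=[t^e]\bigl(U(1+t)(1+t)^{-s}\bigr).
\]
Factoring $U(w)=c\prod_{j=1}^{e}(w-\zeta_j)$ with $|\zeta_j|=1$ and setting $\omega_j:=1-\zeta_j$, this becomes $Z(s)=c\sum_{k=0}^{e}e_k(\omega)\binom{-s}{k}$, where $e_k$ is the $k$-th elementary symmetric polynomial. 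The crucial trigonometric observation is that for each $\zeta_j=e^{i\theta_j}\in\S\setminus\{1\}$, the identity $1-\zeta_j=-2i\sin(\theta_j/2)e^{i\theta_j/2}$ gives $1/\omega_j=\tfrac12+\tfrac{i}{2}\cot(\theta_j/2)$, so $\re(1/\omega_j)=1/2$.

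The roots of $Z(s)$ are not in general equal to the $1/\omega_j$; already for $e=2$ a direct calculation yields $Z(s)=1+\tfrac{a}{2}s(s-1)$ with $a:=2(1-\cos\theta_1)\in(0,4]$, whose roots $\tfrac12\pm\tfrac{i}{2}\sqrt{8/a-1}$ lie on $\re(s)=1/2$ but differ from $1/\omega_j$. Propagating the critical-line property from the data $\omega_j$ to the actual roots of $Z$ is the main obstacle, and I would handle it by induction on $e$: a general palindromic $U$ with roots on $\S$ factors into irreducible palindromic building blocks $(1+z)$ and $(1-\alpha z)(1-\bar\alpha z)$ with $|\alpha|=1$, both of which are handled by direct computation. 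The inductive step controls the transformation $Z\mapsto Z_{\text{new}}$ induced by multiplying $U$ by such a block, and it must be shown that this linear operator preserves the class of degree-$e$ polynomials in $s$ with all zeros on $\re(s)=1/2$. Following Rodriguez-Villegas, one passes to $\tilde Z(u):=i^e Z(1/2+iu)$, which by the functional equation is a real even or odd polynomial in $u$, and applies a Hermite--Biehler/interlacing argument---in the spirit of Lemma \ref{sze}---to show that $\tilde Z_{\text{new}}$ is real-rooted with zeros interlacing those of $\tilde Z$.
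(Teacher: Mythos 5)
The paper itself does not prove this theorem; it quotes it directly from Rodriguez-Villegas \cite{RV}, so there is no in-paper argument for your proposal to match. On the merits, the preliminary material is correct: the polynomiality of $H$ with leading coefficient $U(1)/e!$, the identity $V(1/z)=-zW(z)$, the deduction of the strict palindromicity $z^eU(1/z)=U(z)$ from ``real coefficients, roots on $\mathbb{S}^1$, $U(1)\neq0$'' (so the ``Moreover'' clause must be read as inheriting the unit-circle hypothesis, since real coefficients and $U(1)\neq0$ alone do not imply the functional equation), the resulting $Z(1-s)=(-1)^eZ(s)$, the closed form $Z(s)=[t^e]\bigl(U(1+t)(1+t)^{-s}\bigr)=c\sum_k e_k(\omega)\binom{-s}{k}$ (valid because of palindromicity; in general the residue computation gives $\frac{(-1)^e}{e!}\frac{d^e}{dw^e}[U(w)w^{s-1}]\big|_{w=1}$, and the two agree exactly when the functional equation holds), and the observation $\Re(1/\omega_j)=1/2$.

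The first and principal assertion of the theorem---that all zeros of $Z$ lie on $\Re(s)=1/2$---is, however, not established. You reduce it to the claim that the degree-raising map $Z\mapsto Z_{\mathrm{new}}$ obtained by multiplying $U$ by a palindromic linear or quadratic block preserves having all zeros on the critical line, and you state that this ``must be shown'' by a Hermite--Biehler/interlacing argument without carrying it out; but that claim is the entire content of the theorem, and it is not a routine interlacing statement. For the quadratic block one gets $Z_{\mathrm{new}}(s)=Z(s)+a\tilde{Z}_1(s)+a\tilde{Z}_2(s)$ with $\tilde{Z}_i(s)=c\sum_j e_j(\omega)\binom{-s}{j+i}$ and $a=2(1-\cos\theta)\in(0,4]$, while for the linear block $Z_{\mathrm{new}}(s)=\tilde{Z}_1(s-1)+\tilde{Z}_1(s)$; and already for $e=1$ the auxiliary polynomial $\tilde{Z}_1$ has its zeros at $s=0$ and at $s=2/\omega-1$ with $\Re(s)=0$, so these auxiliaries do not themselves lie on $\Re(s)=1/2$ and no interlacing with $Z$ on the critical line is available ``for free.'' One would need to formulate and prove a precise lemma about the discrete-antidifference polynomials $\tilde{Z}_i$ and show the hypotheses propagate through the induction; this is a genuine missing step, not a deferred routine computation, and it cannot be discharged simply by appealing to the spirit of Lemma \ref{sze}.
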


 We now show that under the hypotheses of Theorem \ref{thm:sym^n}, $p_{\mathrm{Sym}^n E}(z)$ satisfies the hypotheses of Theorem \ref{thm:RV}.

\begin{lemma}
\label{lem:periods}
	Let $E/\Q$ be a semistable elliptic curve, and suppose that $\mathrm{Sym}^n E$ satisfies the hypotheses of Theorem \ref{thm:sym^n}.  If $\varepsilon(\mathrm{Sym}^n E)=1$, then $p_{\mathrm{Sym}^n E}(1)\neq0$.  If $\varepsilon(\mathrm{Sym}^n E)=-1$, then $p_{\mathrm{Sym}^n E}(z)$ has a simple zero at $z=1$.
\end{lemma}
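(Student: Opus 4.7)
The plan is to read off $p_{\M}(1)$ and $p_{\M}'(1)$ directly from the factored form \eqref{eqn:p_def_func}, $p_{\M}(z)=\varepsilon z^{m}(P_{\M}(z)+\varepsilon P_{\M}(1/z))$, with $\M=\mathrm{Sym}^n E$ and $m=(n-1)/2\geq 1$. Setting $z=1$ immediately gives $p_{\M}(1)=\varepsilon(1+\varepsilon)P_{\M}(1)$, which already vanishes when $\varepsilon=-1$ and equals $2P_{\M}(1)$ when $\varepsilon=+1$. Both parts of the lemma then reduce to positivity of a sum of manifestly nonnegative terms, once we know that $\Lambda(m+1+j,\M)>0$ for $1\leq j\leq m$.

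I would establish that strict positivity first, since it is the only nontrivial input. The argument used in the proof of Lemma~\ref{lem:monotonicity} shows that the zeros appearing in the Hadamard factorization \eqref{eqn:hadamard} of $\Lambda(s,\M)$ all lie in the strip $|\re(s)-(m+1)|<1/2$. Consequently, for real $s\geq m+2$ the product \eqref{eqn:hadamard} is a product of nonzero continuous real factors, so $\Lambda(s,\M)$ has constant sign on $[m+2,\infty)$; and that sign is positive because the absolutely convergent Euler product representation forces $\Lambda(s,\M)>0$ for all sufficiently large real $s$.

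For $\varepsilon(\mathrm{Sym}^n E)=+1$ the binomial coefficients appearing in the definition of $P_{\M}$ are positive integers, $\Lambda(m+1,\M)\geq 0$ by Hypothesis~\ref{hyp}(5), and $\Lambda(m+1+j,\M)>0$ for $1\leq j\leq m$ by the previous paragraph. Hence every summand of $P_{\M}(1)$ is nonnegative and the $j=1$ summand is strictly positive, yielding $p_{\M}(1)=2P_{\M}(1)\neq 0$.

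For $\varepsilon(\mathrm{Sym}^n E)=-1$ the vanishing $p_{\M}(1)=0$ is built into \eqref{eqn:p_def_func}, so I only need $p_{\M}'(1)\neq 0$. Differentiating $p_{\M}(z)=-z^{m}(P_{\M}(z)-P_{\M}(1/z))$ at $z=1$, the $-mz^{m-1}(P_{\M}(z)-P_{\M}(1/z))$ term vanishes because $P_{\M}(1)-P_{\M}(1)=0$, and the remaining contributions give
\[
p_{\M}'(1)=-2P_{\M}'(1)=-2\sum_{j=1}^{m}j\Big[\prod_{\nu=0}^{m}\binom{2m-\nu}{m-j}^{h_{\nu}}\Big]\Lambda(m+1+j,\M).
\]
Each summand is strictly positive by the same positivity input, so $p_{\M}'(1)\neq 0$ and the zero at $z=1$ is simple. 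The only real obstacle is securing the strict positivity $\Lambda(m+1+j,\M)>0$ for $j\geq 1$ from the location of the zeros of $\Lambda(s,\M)$; once that is in hand, both parts of the lemma follow by sign inspection.
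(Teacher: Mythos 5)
Your proof is correct, and it takes a genuinely different route from the paper's. Both arguments start from the factored form \eqref{eqn:p_def_func}, giving $p_{\M}(1)=\varepsilon(1+\varepsilon)P_{\M}(1)$ and reducing both cases of the lemma to strict positivity of a sum whose terms are visibly nonnegative. Where you and the paper part ways is in how that strict positivity is secured. You show $\Lambda(m+1+j,\M)>0$ for $j\geq1$ directly: since $m+1+j\geq m+2$ lies strictly outside the zero-containing strip used in Lemma \ref{lem:monotonicity}, the Hadamard product \eqref{eqn:hadamard} shows $\Lambda(s,\M)$ is nonvanishing and of constant sign on $[m+2,\infty)$, and the Euler product and Gamma factors force that sign to be positive for large real $s$. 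The paper instead argues by contradiction through an arithmetic input: if $p_{\mathrm{Sym}^n E}(1)=0$ (resp.\ $p'_{\mathrm{Sym}^n E}(1)=0$), then all of the relevant $\Lambda(m+1+j,\mathrm{Sym}^n E)$ vanish, hence all Deligne periods of $\mathrm{Sym}^n E$ vanish, which would force the two Deligne periods of $E$ itself to vanish (using the relation in \cite{Watkins}) — a contradiction. Your version is cleaner and more self-contained: it relies only on the analytic facts already in play for Lemma \ref{lem:monotonicity}, it proves the stronger statement that each individual completed value $\Lambda(m+1+j,\M)$ with $j\geq1$ is strictly positive, and it avoids invoking period nonvanishing from arithmetic geometry. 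Both are valid proofs; yours would actually let the lemma be stated and proved for a general $\M$ satisfying the hypotheses of Theorem \ref{thm:main_theorem_1}, not just for $\mathrm{Sym}^n E$.
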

\begin{proof}
	Let $n\geq3$ be odd, let $m=\frac{n-1}{2}$, and let $\varepsilon=\varepsilon(\mathrm{Sym}^n E)$.  By \eqref{eqn:p_def_func} and the fact that $L(s,\mathrm{Sym}^n E)$ is self-dual, we have that $p_{\mathrm{Sym}^n E}(1)$ equals
	\[
	\Big[\prod_{\nu=0}^m{2m-\nu\choose m}^{h_{\nu}}\Big]\Lambda(m+1,\mathrm{Sym}^n E)+2\sum_{j=1}^{m}\Big[\prod_{\nu=0}^m{2m-\nu\choose m-j}^{h_{\nu}}\Big]\Lambda(m+1+j,\mathrm{Sym}^n E)
	\]
	if $\varepsilon=1$ and $p_{\mathrm{Sym}^n E}(1)=0$ if $\varepsilon=-1$.
	
	When $\varepsilon=1$, it follows from Lemma \ref{lem:monotonicity} and Hypothesis \ref{hyp} (both of which hold whenever $\mathrm{Sym}^n E$ satisfies the hypotheses of Theorem \ref{thm:sym^n}) that the sum defining $p_{\mathrm{Sym}^n E}(1)$ has only nonnegative terms.  If $p_{\mathrm{Sym}^n E}(1)=0$, then it would follow that all Deligne periods of $\mathrm{Sym}^n E$ would equal zero.  This implies that the Deligne periods of $E$ are both zero, which is not true. (For the relationship between the periods of $E$ and the periods of $\mathrm{Sym}^n E$, see \cite{Watkins}, for example.)  Thus $p_{\mathrm{Sym}^n E}(1)\neq0$.
	
	Now, suppose that $\varepsilon=-1$.  Note that the sum defining $p'_{\mathrm{Sym}^n E}(z)$ is a sum of nonpositive terms.  Much like the case where $\varepsilon=1$, if all of these terms equal zero simultaneously, then all of the Deligne periods of $E$ are zero, which cannot happen.  Thus $p_{\mathrm{Sym}^n E}(z)$ has a simple zero at $z=1$.
\end{proof}

Define $\mathfrak{s}(m,n)$ by $\prod_{j=0}^{n}(x-j)=\sum_{m=0}^{n}\mathfrak{s}(n,m)x^{m}$.  Let
\[
\mathfrak{M}_{\mathrm{Sym}^n E}(j):=\frac{1}{(n-1)!}\sum_{m=0}^{n-1}\Big[\prod_{\nu=0}^{\frac{n-1}{2}} {n-1-\nu\choose \frac{n-1}{2}-|\frac{n-1-2m}{2}|}^{h_\nu}\Big]\Lambda(m+1,\mathrm{Sym}^n E)m^{j}
\]
and
\begin{equation}
\label{eqn:Z}
Z_{\mathrm{Sym}^n E}(s):=\varepsilon\sum_{h=0}^{n-1}(-s)^h\sum_{j=0}^{n-1-h}{h+j\choose h}\mathfrak{s}(n-1,h+j)\mathfrak{M}_{\mathrm{Sym}^n E}(j).
\end{equation}

\begin{proof}[Proof of Theorem \ref{thm:zeta}]
If $n\geq1$ is an integer, then we have the Maclaurin expansion
\[
(1-z)^{-n}=\sum_{\ell=0}^{\infty}{n-1+\ell\choose n-1}z^{\ell}.
\]
Sending $j$ to $n-1-j$ in the sum defining $p_{\mathrm{Sym}^n E}(z)$, using the functional equation for $\Lambda(s,\mathrm{Sym}^n E)$, and sending $\ell$ to $\ell+j-(n-1)$ yields the identity
\begin{equation}
\label{eqn:hl_def}
\frac{p_{\mathrm{Sym}^n E}(z)}{(1-z)^{n}}=\varepsilon\sum_{\ell=0}^{\infty}z^{\ell}\Big(\sum_{j=0}^{n-1}\Big[\prod_{\nu=0}^{\frac{n-1}{2}} {n-1-\nu\choose \frac{n-1}{2}-|\frac{n-1-2j}{2}|}^{h_\nu}\Big]\Lambda(j+1,\mathrm{Sym}^n E){\ell+j \choose n-1}\Big).
\end{equation}
Let $h_{\ell}$ be the coefficient of $z^{\ell}$ in \eqref{eqn:hl_def}.  With $\mathfrak{s}(n-1,m)$ defined above, we have
\[
h_{\ell}=\frac{\varepsilon}{(n-1)!}\sum_{h=0}^{n-1}\Big[\prod_{\nu=0}^{\frac{n-1}{2}} {n-1-\nu\choose \frac{n-1}{2}-|\frac{n-1}{2}-j|}^{h_\nu}\Big]\Lambda(j+1,\mathrm{Sym}^n E)\sum_{m=0}^{n-1}\mathfrak{s}(n-1,m)(\ell+j)^m
\]
which equals $Z_{\mathrm{Sym}^n E}(-\ell)$  (see \cite{ORS} for a similar manipulation).  This proves Part 3.

Let
\[
\hat{p}_{\mathrm{Sym}^n E}(z)=
\frac{p_{\mathrm{Sym}^n E}(z)}{(1-z)^{-\delta_{-1,\varepsilon}}},
\]
where $\delta_{i,j}$ is the Kronecker delta function.  By Theorem \ref{thm:sym^n} and Lemma \ref{lem:periods}, we see that $\hat{p}_{\mathrm{Sym}^n E}(z)$ is a polynomial of degree $n-1-\delta_{-1,\varepsilon}$, all of whose roots lie on $\mathbb{S}^1$.  Moreover, $\hat{p}_{\mathrm{Sym}^n E}(1)\neq0$.  Thus
\[
\frac{p_{\mathrm{Sym}^n E}(z)}{(1-z)^n}=\frac{\hat{p}_{\mathrm{Sym}^n E}(z)}{(1-z)^{n-\delta_{-1,\varepsilon}}}.
\]
Parts 1 and 2 follow from an application of Part 3 and Theorem \ref{thm:RV} with $e=n-1-\delta_{-1,\varepsilon}$.
\end{proof}

\def\cprime{$'$}

\end{document}